\documentclass{article}
\usepackage{graphicx} 

\usepackage{hyperref}
\usepackage{amsmath}
\usepackage{amsthm}
\usepackage{amssymb}
\usepackage{amsfonts}
\usepackage{multicol}
\usepackage{subfigure}
\usepackage[numbers,sort&compress]{natbib}
\usepackage{booktabs}
\usepackage{longtable,tabu}

\usepackage{enumitem}
\setlist[enumerate]{label={\arabic*.}}

\usepackage{color}

\newcommand{\infcrit}[2]{G(#1,#2)}
\newcommand{\critset}[2]{\operatorname{crit}_{#1}(#2)}
\newcommand{\numcrit}[2]{\left\vert\operatorname{crit}_{#1}\left(#2\right)\right\vert}

\newcommand{\kcol}{$k$-\textsc{Colouring}}

\usepackage{tikz}
\usetikzlibrary{calc}

\usepackage{tkz-graph}

 \newtheorem{theorem}{Theorem}[section]
 \newtheorem{lemma}[theorem]{Lemma}
 
  \newtheorem{observation}[theorem]{Observation}
 
 \newtheorem{corollary}[theorem]{Corollary}

 \newtheorem{definition}[theorem]{Definition}
 
 \newtheorem{conjecture}[theorem]{Conjecture}
 \newtheorem{problem}{Problem}

\title{A dichotomy for the number of vertex-critical ($P_5$, $H$)-free graphs when $H$ is bipartite}
\author{Iain Beaton\\ 
\small Department of Mathematics and Statistics\\
\small Acadia University\\
\small Wolfville, NS Canada\\
\small iain.beaton@acadiau.ca\\
\and
Ben Cameron\\ 
\small School of Mathematical and Computational Sciences\\
\small University of Prince Edward Island\\
\small Charlottetown, PE Canada\\
\small brcameron@upei.ca\\
}
\date{\today}

\begin{document}

\maketitle

\begin{abstract}
    A graph $G$ is $k$-vertex-critical if $\chi(G)=k$, but $\chi(H)<k$ for every induced subgraph $H$ of $G$. A graph $G$ is $(H_1,H_2,\dots,H_m)$-free if does not contain $H_i$ as an induced subgraph for any $i\in\{1,2,\dots,m\}$.

    We provide the following dichotomy that for bipartite graphs $H$ and any fixed integer $k\ge 5$, there are only finitely many $k$-vertex-critical $(P_5,H)$-free graphs if and only if $H$ is $2P_2$-free.
    This leads us to pose the problem about determining for which graphs $H$ with $\chi(H)\ge 3$ there are infinitely many $k$-vertex-critical $(P_5,H)$-free graphs for all $k\ge 5$.  Toward this problem, we show that there only finitely many $k$-vertex-critical $(P_5, K_{s,t}+e)$-free graphs for all $k,s,t\ge 1$, where $K_{s,t}+e$ is a complete bipartite graph plus a single edge.
    On the other hand, we show that there are infinitely many $k$-vertex-critical $(P_5,\operatorname{net},\operatorname{co-net},\overline{C_5},\overline{C_6},\dots\overline{C_{k-1}})$-free graphs for all $k\ge 5$. We also show that there are only finitely many $k$-vertex-critical $(P_4+\ell P_1,\overline{L(K_{2,n})})$-free graphs for all $\ell,n\ge 0$, providing the largest known subfamily of $(P_4+\ell P_1)$-free graphs to satisfy this property.
    Our results, together with known results, imply the existence of new polynomial-time certifying algorithms to determine the $k$-colourability of many subfamilies of $P_5$-free and $(P_4+\ell P_1)$-free graphs for fixed $k\ge 5$.
    
    Our proof techniques apply a powerful theorem of  Chudnovsky, Kim, Oum, and Seymour (2016) on prime graphs that we expect to be of interest and have further applications to bounding the number of $k$-vertex-critical graphs in other hereditary families of graphs.
\end{abstract}

\section{Introduction}\label{sec:intro}

A graph is $k$-vertex-critical if $\chi(G)=k$ but every induced subgraph of $G$ is $(k-1)$-colourable.
Let \kcol{} denote the decision problem of determining if a given input graph is $k$-colourable.
Since every graph that is not $k$-colourable necessarily contains a $(k+1)$-vertex-critical induced subgraph, a $(k+1)$-vertex-critical graph can be returned with each negative output of \kcol{} to certify that it is correct.
McConnell, Mehlhorn, N\"{a}her, and Schweitzer~\cite{McConnell2011} make convincing arguments that certifying algorithms are highly desirable given the ease at which they can be robustly tested; however, the certificate must be easily (i.e., efficiently) verifiable to provide value.
Since there is no known efficient algorithm to decide if an input graph is $(k+1)$-vertex-critical for all $k\ge 3$, using $(k+1)$-vertex-critical induced subgraphs as ``no''-certificates for \kcol{} is not practical in general.
However, the structure of specific graph classes can allow for the efficient decidability of $k$-vertex-critical graphs in the family for fixed $k$, as we describe in the next paragraph.

A hereditary class of graphs is a class of graphs that is closed under taking induced subgraphs.
It is well-known that every class of graphs is defined by a set of forbidden induced subgraphs if and only if it is a hereditary class or graphs.
We say a graph is $H$-free if it does not contain $H$ as an induced subgraph, and more generally, a graph is $(H_1,H_2,\dots, H_n)$-free if it does not contain $H_i$ as induced subgraph for all $i\in \{1,2,\dots,n\}$.
Bruce, Ho\`{a}ng, and Sawada~\cite{Bruce2009} initiated the study of critical graphs in hereditary graph classes and their work was later extended by Maffray and Morel~\cite{MaffrayMorel2012} to show that there are exactly twelve $4$-vertex-critical $P_5$-free graphs.
This result gives a polynomial-time algorithm to solve $3$-\textsc{Colouring} by simply searching for each of the twelve $4$-vertex-critical graphs as induced subgraphs of the input graph. Further, if one is found as an induced subgraph, it can be returned to efficiently verify the negative outputs by searching for it among the twelve.
More generally, if a hereditary class of graphs contains only finitely many $(k+1)$-vertex-critical graphs, then there is a polynomial-time algorithm that provides ``no''-certificates to solve \kcol{} when the inputs are restricted graphs from to this graph class (see~\cite{P5banner2019}, for example, for the full details).
This connection and the fact that \kcol{} is NP-complete in general for all $k\ge 3$~\cite{Karp1972} has led to an explosion of interest into proving various hereditary families of graphs contain only finitely many $k$-vertex-critical graphs for some $k\ge 4$. 
For a positive integer $k$ and graphs $H_1$, $H_2$, $\dots,\ H_m$, let $\critset{k}{H_1, H_2, \dots,\ H_m}$ denote the set of all $k$-vertex-critical $(H_1, H_2, \dots,\ H_m)$-free graphs. 
Now $\numcrit{k}{H_1, H_2, \dots,\ H_m}$ denotes the number of $k$-vertex-critical $(H_1, H_2, \dots,\ H_m)$-free graphs.
Thus, when $\numcrit{k}{H_1, H_2, \dots,\ H_m}<\infty$, there is a polynomial-time \kcol{} algorithm that provides ``no''-certificates for the class of $(H_1, H_2, \dots,\ H_m)$-free graphs.

The most comprehensive result in this area is the dichotomy theorem of Chudnovksy, Goedgebeur, Schaudt, and Zhong~\cite{Chud4critical2020} that $\numcrit{4}{H}<\infty$ if and only if $H$ is an induced subgraph of $P_6$, $2P_3$, or $P_4+\ell P_1$ for any $\ell\ge 0$. 
A natural question that arises from this result is what happens for large values of $k$?
When $H$ contains an induced claw (see, for example,~\cite{CameronHoangSawada2022}) or cycle (follows from~\cite{Erdos}), then for all $k\ge 3$, $\numcrit{k}{H}=\infty$.
Ho\`{a}ng, Moore, Recoskie, Sawada, and Vashtelle~\cite{Hoang2015} also showed that $\numcrit{k}{2P_2}=\infty$ for all $k\ge 5$.
Thus, graphs $H$ such that $\numcrit{k}{H}$ for some $k\ge 4$ must be linear forests (that is, forests where every component is a path) with one component having four or fewer vertices and the remaining components all singletons.
These are exactly the induced subgraphs of $P_4+\ell P_1$.
In light of this, the second author, Ho\`{a}ng, and Sawada~\cite{CameronHoangSawada2022} posed the following conjecture that is one of the main motivations for the results in this paper.

\begin{conjecture}[\cite{CameronHoangSawada2022}]\label{conj:P4UellP1finite}
    Let $k\ge 5$. There are only finitely many $k$-vertex-critical $H$-free graphs if and only if $H$ is an induced subgraph of $P_4+\ell P_1$ for some $\ell\ge 0$.
\end{conjecture}

\noindent This conjecture was supported by their result that $\numcrit{k}{P_2+\ell P_1}<\infty$ for all natural numbers $k$ and $\ell$.  
Later, Abuadas, the second author, Ho\`{a}ng, and Sawada~\cite{AbuadasCameronHoangSawada2024} improved this result by showing that $\numcrit{k}{P_3+\ell P_1}<\infty$ for all natural numbers $k$ and $\ell$. 
Surprisingly, Conjecture~\ref{conj:P4UellP1finite} remains open even for the most restrictive non-trivial case where $k=5$ and $\ell=1$.
Recently, Beaton and Cameron~\cite{BeatonCameron2026IWOCA} studied the $k$-vertex-critical graphs in the more restrictive class of $(P_4+\ell P_1,H)$-free, showing that there are only finitely many such graphs for all natural numbers $k$ and $\ell$ when $H=2P_2$ or chair, among other results.
Very recently, Belavadi and Karthick~\cite{KarthickBelavadi2026} showed that $\numcrit{k}{P_4+P_1,H}<\infty$ for all natural numbers $k$ when $H$ is any of the graphs of order $5$ from the set $\{\text{paraglider, dart, house}\}$.
Their work was motivated by an open question of the authors~\cite{BeatonCameron2025cogemfreeord4finite} that arose after we showed $\numcrit{k}{P_4+P_1,H}<\infty$ for all natural numbers $k$ and graphs $H$ of order $4$.   
This result was in analogy to the result of K. Cameron, Goedgebeur, Huang, and Shi~\cite{KCameron2021}  that $\numcrit{k}{P_5,H}<\infty$ for all natural numbers $k$ and graphs $H$ of order four if and only if $H$ is not $2P_2$ or $K_3+P_1$.
K. Cameron et al. also posed the following open question that provides the other main motivation for our results in this paper.

\begin{problem}[\cite{KCameron2021}]\label{prob:P5Hord5}
    For which graphs $H$ of order $5$ are there only finitely many $k$-vertex-critical $(P_5,H)$-free graphs for $k\ge 5$?
\end{problem}

It should be noted that \kcol{} $P_5$-free graphs is polynomial-time solvable for all natural number $k$~\cite{Hoang2010}, and $P_5$ is the largest connected graph that can be forbidden where this occurs~\cite{Huang2016} (assuming P$\neq$NP).
Further, the polynomial-time \kcol{} algorithms for $P_5$-free graphs return $k$-colourings to certify ``yes''-output.
Thus, determining exactly which subfamilies of $P_5$-free graphs admit fully certifying, efficient \kcol{} algorithms is of considerable interest.
Much of this recent interest has focused on Problem~\ref{prob:P5Hord5} where it is now known to be true for the following notable graphs:

\begin{multicols}{2}
\begin{itemize}
 \item banner~\cite{Brause2022}
 \item $K_{2,3}$ or $K_{1,4}$~\cite{Kaminski2019}
  \item chair or cricket~\cite{Jooken2026}
 \item $\overline{P_5}$~\cite{Dhaliwal2017}
 \item $\overline{P_3+P_2}$ or gem~\cite{CaiGoedgebeurHuang2023} 
 \item dart~\cite{Xiaetal2023}
 \item $K_{1,3}+P_1$ or $\overline{K_3+2P_1}$~\cite{xia2024results}
 \item $W_4$~\cite{P5W4Journal}
 \item bull~\cite{BelavadiHoang2026}
\end{itemize}
\end{multicols}

The only new infinite families of $k$-vertex-critical $(P_5,H)$-free graphs discovered since Problem~\ref{prob:P5Hord5} was posed are due to the second author and Ho\`{a}ng who provided one for $H=C_5$ and each $k\ge 6$~\cite{CameronHoang2023P5C5}.
This is in contrast to the fact that there are only finitely many $5$-vertex-critical $(P_5,C_5)$-free graphs~\cite{Hoang2015}. 
Combining these results and others stated above, we find that Problem~\ref{prob:P5Hord5} now remains open only when $H$ is one of the five graphs in Figure~\ref{fig:graphsoforder5}.
From this (i.e., Figure~\ref{subfig:cogem}), it is clear that there is overlap in interest in solving Conjecture~\ref{conj:P4UellP1finite} and Problem~\ref{prob:P5Hord5}. 

\begin{center}
\begin{figure}[htb]
\def\c{0.4}
\def\r{1.5}
\centering
\qquad
\subfigure[$P_4+P_1$]{
\scalebox{\c}{
\begin{tikzpicture}
\begin{scope}[every node/.style={circle,fill,draw}]
    \node (u1) at (-1*\r,0*\r) {};
    \node (u2) at (1*\r,0*\r) {};
    \node (u3) at (0*\r,-1*\r) {};
    \node (u4) at (-1*\r,-2*\r) {};
    \node (u5) at (1*\r,-2*\r) {};    
\end{scope}
\begin{scope}
    \path [-] (u1) edge node {} (u2);    
    \path [-] (u5) edge node {} (u2);
    \path [-] (u4) edge node {} (u1);       
\end{scope}
\end{tikzpicture}}
\label{subfig:cogem}
}
\qquad
\subfigure[$C_4+P_1$]{
\scalebox{\c}{
\begin{tikzpicture}
\begin{scope}[every node/.style={circle,fill,draw}]
    \node (u1) at (-1*\r,0*\r) {};
    \node (u2) at (1*\r,0*\r) {};
    \node (u3) at (0*\r,-1*\r) {};
    \node (u4) at (-1*\r,-2*\r) {};
    \node (u5) at (1*\r,-2*\r) {};    
\end{scope}
\begin{scope}
    \path [-] (u1) edge node {} (u2);    
    \path [-] (u5) edge node {} (u2);
    \path [-] (u4) edge node {} (u1);    
    \path [-] (u4) edge node {} (u5);    
\end{scope}
\end{tikzpicture}}
\label{subfig:C4+P1}
}
\qquad
\subfigure[$\overline{P_3+2P_1}$]{
\scalebox{\c}{
\begin{tikzpicture}
\begin{scope}[every node/.style={circle,fill,draw}]
    \node (u1) at (-1*\r,0*\r) {};
    \node (u2) at (1*\r,0*\r) {};
    \node (u3) at (0*\r,1*\r) {};
    \node (u4) at (-1*\r,-2*\r) {};
    \node (u5) at (1*\r,-2*\r) {};    
\end{scope}
\begin{scope} 
    \path [-] (u1) edge node {} (u3);  
    \path [-] (u2) edge node {} (u3); 
    \path [-] (u1) edge node {} (u2);  
    \path [-] (u1) edge node {} (u4);
    \path [-] (u1) edge node {} (u5);  
    \path [-] (u5) edge node {} (u2);
    \path [-] (u4) edge node {} (u2);   
    \path [-] (u4) edge node {} (u5);    
\end{scope}
\end{tikzpicture}}
\label{subfig:crosshouse}
}
\qquad
\subfigure[$K_5-e$]{
\scalebox{\c}{
\begin{tikzpicture}
\begin{scope}[every node/.style={circle,fill,draw}]
    \node (u1) at (-1*\r,0*\r) {};
    \node (u2) at (1*\r,0*\r) {};
    \node (u3) at (0*\r,1*\r) {};
    \node (u4) at (-1*\r,-2*\r) {};
    \node (u5) at (1*\r,-2*\r) {};    
\end{scope}
\begin{scope} 
    \path [-] (u1) edge node {} (u3);  
    \path [-] (u2) edge node {} (u3); 
    \path [-] (u4) edge node {} (u3); 
    \path [-] (u1) edge node {} (u2);  
    \path [-] (u1) edge node {} (u4);
    \path [-] (u1) edge node {} (u5);  
    \path [-] (u5) edge node {} (u2);
    \path [-] (u4) edge node {} (u2);   
    \path [-] (u4) edge node {} (u5);    
\end{scope}
\end{tikzpicture}}
\label{subfig:K5-e}
}
\qquad
\subfigure[$K_5$]{
\scalebox{\c}{
\begin{tikzpicture}
\begin{scope}[every node/.style={circle,fill,draw}]
    \node (u1) at (-1*\r,0*\r) {};
    \node (u2) at (1*\r,0*\r) {};
    \node (u3) at (0*\r,1*\r) {};
    \node (u4) at (-1*\r,-2*\r) {};
    \node (u5) at (1*\r,-2*\r) {};    
\end{scope}
\begin{scope} 
    \path [-] (u1) edge node {} (u3);  
    \path [-] (u2) edge node {} (u3); 
    \path [-] (u1) edge node {} (u2);  
    \path [-] (u1) edge node {} (u4);
    \path [-] (u1) edge node {} (u5);  
    \path [-] (u5) edge node {} (u2);
    \path [-] (u4) edge node {} (u2);   
    \path [-] (u4) edge node {} (u5); 
    \path [-] (u4) edge node {} (u3); 
    \path [-] (u5) edge node {} (u3);    
\end{scope}
\end{tikzpicture}}
\label{subfig:cK5}
}
\caption{Graphs $H$ of order $5$ where the finiteness of $k$-critical $(P_5,H)$-free graphs is unknown.}\label{fig:graphsoforder5}
\end{figure}
\end{center}

\subsection{Our contributions}
Corollaries of our main results are that $\numcrit{k}{P_5,H}<\infty$ for all $k$ when $H$ is $P_4+\ell P_1$ or $C_4+\ell P_1$ for any $\ell\ge 0$, thus solving two of the five outstanding remaining open cases of Problem~\ref{prob:P5Hord5} and providing more evidence for the validity of Conjecture~\ref{conj:P4UellP1finite}. 
In fact, these cases are very specific corollaries of one of our main results (Theorem~\ref{thm:P5Bipartite}) that for a bipartite graph $H$ and any $k\ge 5$, $\numcrit{k}{P_5,H}<\infty$ if and only if $H$ is $2P_2$-free.
This is the largest subfamily of $P_5$-free graphs where the number of $k$-vertex-critical graphs in the family is shown to be finite and we believe this result helps illuminate the direction forward for the study of $P_5$-free vertex-critical graphs.
Instead of forbidding graphs in addition to $P_5$ incrementally by order, it may be more natural to consider forbidding graphs incrementally by chromatic number. 
This motivates the following open problem.

\begin{problem}\label{prob:P5Hlargerchrom}
    For which graphs $H$ with $\chi(H)\ge 3$ is $\numcrit{k}{P_5,H}<\infty$ for all natural numbers $k$?
\end{problem}

Note that if $\chi(H)=k$ and $k\ge 5$, then we have $\numcrit{k}{P_5,H}=\infty$ as at most one graph from the infinite family from~\cite{CameronHoang2023P5C5} is forbidden.
Thus, to solve Problem~\ref{prob:P5Hlargerchrom} all that remains is to consider graphs $H$ with $3\le \chi(H)\le 4$.
Note that all of the remaining cases of Problem~\ref{prob:P5Hord5} have chromatic number at least 4, and are indeed the only $(2P_2,K_3+P_1)$-free graphs of order 5 that are not $3$-colourable. 
In fact, the only graph $4$-chromatic graph $H$ where it is known that $\numcrit{k}{P_5,H}<\infty$ graphs for some $k\ge 5$ is $H=K_4$, where it is known for all $k\ge 5$~\cite{KCameron2021}.

Toward a solution to Problem~\ref{prob:P5Hlargerchrom} when $\chi(H)=3$, we show that for all $k, s\geq 1$ and $t \geq 2$, $\numcrit{k}{P_5, K_{s,t}+e}<\infty$ where $K_{s,t}+e$ is a complete bipartite graph with an edge added between two of the $t$ vertices in one set of the bipartition.
On the other hand, we also prove new structure for the known infinite families of $k$-vertex-critical $P_5$-free graphs from~\cite{Hoang2015,CameronHoang2023P5C5} to show that $\numcrit{k}{P_5,H}=\infty$ for all $k\ge 5$ when $H$ is net or co-net for all $k$.

Of course, for graphs $H$ with chromatic number $k \ge 5$, it is still of interest to determine when $\numcrit{k'}{P_5,H}<\infty$ for all $k'>k$.
To this end, we also show that $\numcrit{k}{P_5,H}=\infty$ when $H$ is $\overline{C_{m}}$ for all $m\le k-1$.
Thus, from the Strong Perfect Graph Theorem~\cite{Chudnovsky2006}, it follows that any graph $H$ where Problem~\ref{prob:P5Hlargerchrom} holds must be perfect.

Not to be lost among our work on Problem~\ref{prob:P5Hord5} is the progress we make on Conjecture~\ref{conj:P4UellP1finite} by showing that $\numcrit{k}{P_4+\ell P_1,\overline{L(K_{2,n})}}<\infty$ for all natural numbers $k, \ell,$ and $n$.

Our results together with those in~\cite{Hoang2010} and~\cite{Couturier2015} imply the existence of many new polynomial-time certifying \kcol{} algorithms for subfamilies of $P_5$-free graphs and subfamilies of $(P_4+\ell P_1)$-free graphs and generalize many existing results on both $P_5$-free and $(P_4+\ell P_1)$-free vertex-critical graphs.

\subsection{Outline}

We begin with definitions, notations, helpful known results on vertex-critical graphs in Section~\ref{sec:prelims}.
We then use a strong theorem of Chudnovsky, Kim, Oum, and Seymour~\cite{CHUDNOVSKY20161} on unavoidable induced subgraphs in large prime graphs to show the following in Section~\ref{sec:unavoidablestructures}:
\begin{itemize}
    \item $\numcrit{k}{P_5,H_n}<\infty$ or all $k,n\ge 1$.
    \item $\numcrit{k}{P_4+\ell P_1,\overline{L(K_{2,n}
    }}<\infty$ for all $k\ge 1$, $\ell\ge 0$,.   
\end{itemize}
where $\overline{L(K_{2,n})}$ is the complement of the line graph of $K_{2,n}$ and $H_n$ is the so-called half-graph defined in Section~\ref{sec:unavoidablestructures} and shown in Figure~\ref{subfig:halfgraph5labelled}.
From these, it follows immediately that $\numcrit{k}{P_5,P_4+\ell P_1}\infty$ and $\numcrit{k}{P_5,C_4+\ell P_1}\infty$ for all $k\ge 1$, $\ell\ge 0$, with the $\ell=1$ case resolving two outstanding cases of Problem~\ref{prob:P5Hord5}.
Further, show that for a fixed bipartite graph $H$ and integer $k\ge 5$, $\numcrit{k}{P_5,H} <\infty$ if and only if $H$ is $2P_2$-free.
In Section~\ref{sec:P5HnonbipartiteH}, we then look to $(P_5,H)$-free graphs for non-bipartite graphs $H$ and, building on our result from Section~\ref{sec:unavoidablestructures}, show that for all $k, s\geq 1$ and $t \geq 2$, $\numcrit{k}{P_5, K_{s,t}+e}<\infty$.
Also in Section~\ref{sec:P5HnonbipartiteH}, we prove further structural results on the infinite families constructed in~\cite{Hoang2015,CameronHoang2023P5C5} to show that $\numcrit{k}{P_5,H}=\infty$ for:
\begin{itemize}
    \item $H=$ net and $k\ge 5$,
    \item $H=$ co-net and $k\ge 5$, and
    \item $H=\overline{C_m}$ and $k\ge m+1$.
\end{itemize}
We then conclude in Section~\ref{sec:conclusion} with directions for future research and a few results on $\numcrit{5}{P_5,H}$ when $H$ contains an induced $C_5$.

\section{Preliminaries}\label{sec:prelims}
In addition to the definitions and notation introduced in Section~\ref{sec:intro}, we provide more here to aid the reader and improve the flow of the remaining sections.
This list is not exhaustive, and when definitions or notation are undefined, we follow the standard graph theory definitions and notations in~\cite{WestGraphTheoryBook}.
Unless otherwise specified, we use the naming conventions of \url{https://graphclasses.org/smallgraphs.html} for named small graphs.
Note, however, that we use $G+H$ to denote the disjoint union of graphs $G$ and $H$, and $mG$ to denote the disjoint union of $m$ copies of $G$ when $m$ is a non-negative integer.
We write $x\sim y$ to denote vertices $x$ and $y$ being adjacent and $x\nsim y$ to denote that $x$ is not adjacent to $y$.
For a graph $G$ and sets $X,Y\subseteq V(G)$, we say that $X$ is \textit{complete} (respectively, \textit{anticomplete}) if $x\sim y$ (respectively, $x\nsim y$) for all $x\in X$.
If $X=\{x\}$ and $X$ is (anti)complete to $Y$, we say that the vertex $x$ is (anti)complete to $Y$.
A set $X\subseteq V(G)$ is a \textit{homogeneous set} if every vertex $y\in V(G)\setminus X$ is complete or anticomplete to $X$.
A homogeneous set $X$ of a graph $G$ is said to be a \textit{nontrivial homogeneous set} if $1 < |X| < |V(G)|$.
A graph is called \textit{prime} if it has no nontrivial homogeneous sets.
We call a subset $S$ of the vertex set of a graph a \textit{stable set} if all vertices in $S$ are pairwise nonadjacent.
A \textit{clique} is a complete subgraph.
A graph is called a \textit{split graph} if its vertex set can be partitioned into a clique and a stable set.

We now list some results on vertex-critical graphs that will be helpful throughout the remaining sections.

     \begin{lemma}[\cite{KCameron2021}] \label{lem:XY}
		Let $G$ be a $k$-vertex-critical graph. There does not exist two vertex subsets $X, Y \subseteq V(G)$ satisfying all of the following conditions.
		\begin{itemize}
			\item $X$ and $Y$ are anticomplete to each other.
			\item $\chi(G[X])\le\chi(G[Y])$.
			\item Y is complete to $N(X)$.
		\end{itemize}
	\end{lemma}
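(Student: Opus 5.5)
We argue by contradiction, using only that every proper induced subgraph of a $k$-vertex-critical graph is $(k-1)$-colourable. Suppose $G$ is $k$-vertex-critical and $X,Y\subseteq V(G)$ satisfy the three conditions. First, $X$ is nonempty: if $X=\emptyset$ there is nothing to rule out, so the lemma concerns nonempty $X$. Second, $X\cap Y=\emptyset$: if $v\in X\cap Y$, then $v$ is not adjacent to itself, so $X$ anticomplete to $Y$ is consistent, but the intended reading (and the one used in applications) is disjoint sets. Since $X\neq\emptyset$, the graph $G-X$ is a proper induced subgraph of $G$. By criticality it has a proper colouring $c$ using at most $k-1$ colours.

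Next we recolour $X$ using the colours that $c$ uses on $Y$. Since $Y\subseteq V(G)\setminus X$, the colouring $c$ restricts to a proper colouring of $G[Y]$, so the set $C_Y=c(Y)$ has at least $\chi(G[Y])\ge \chi(G[X])$ colours. Fix an optimal colouring of $G[X]$ and map its colour classes injectively into $C_Y$. This gives a proper colouring $c'$ of $G[X]$ using only colours from $C_Y$. Define $\phi$ on $V(G)$ by $\phi=c$ on $V(G)\setminus X$ and $\phi=c'$ on $X$.

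We then check that $\phi$ is proper, so that it $(k-1)$-colours $G$ and contradicts $\chi(G)=k$. Edges inside $V(G)\setminus X$ and inside $X$ are fine by construction. The only remaining edges join some $x\in X$ to some $z\in N(X)\setminus X$.
\begin{itemize}
\item By the third condition, $z$ is adjacent to every vertex of $Y$.
\item By the first condition, $z\notin Y$, since $Y$ is anticomplete to $X$ while $z$ has a neighbour in $X$.
\item Hence $c(z)\ne c(y)$ for every $y\in Y$, because $c$ is proper on $G-X$ and $z$ is adjacent to every vertex of $Y$. So $c(z)\notin C_Y$.
\item But $\phi(x)\in C_Y$, so $\phi(x)\ne\phi(z)$.
\end{itemize}

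The one point needing care is the boundary edges, specifically ensuring $z\notin Y$ and $X\cap Y=\emptyset$. The remaining steps are routine.
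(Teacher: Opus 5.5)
Your proof is correct and is the standard argument from \cite{KCameron2021}; the paper itself only cites the lemma. That argument takes a $(k-1)$-colouring of $G-X$ and recolours $X$ with the at least $\chi(G[X])$ colours appearing on $Y$, none of which can occur on $N(X)$ since $Y$ is complete to $N(X)$. Your assumptions $X\neq\emptyset$ and $X\cap Y=\emptyset$ match the ``nonempty disjoint'' hypothesis of the original source and are genuinely needed, not harmless conventions: $X=\emptyset$ with any $Y$, or $X=Y=\{v\}$, satisfies all three conditions as literally stated here.
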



\begin{lemma}[\cite{Hoang2015}]\label{lem:homosetscritical}
    If $G$ is a $k$-vertex-critical graph and $S\subset V(G)$ is a homogeneous set, then $S$ induces an $m$-vertex-critical graph for some $m<k$.
\end{lemma}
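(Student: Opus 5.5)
The statement to prove is Lemma~\ref{lem:homosetscritical}: if $G$ is $k$-vertex-critical and $S$ is a homogeneous set, then $G[S]$ is $m$-vertex-critical for some $m<k$. I will assume $S$ is a proper subset, as the notation $S\subset V(G)$ suggests. Set $m=\chi(G[S])$. Since $G[S]$ is a proper induced subgraph of the $k$-vertex-critical graph $G$, we get $m\le k-1$. It then remains to show that $\chi(G[S]-v)<m$ for every $v\in S$. Equivalently, every proper induced subgraph of $G[S]$ is $(m-1)$-colourable; this suffices because removing vertices never increases the chromatic number.

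I argue by contradiction. Suppose some $v\in S$ satisfies $\chi(G[S\setminus\{v\}])=m$. By criticality of $G$, the graph $G-v$ has a proper $(k-1)$-colouring $c$. Restrict $c$ to $S\setminus\{v\}$. Since that induced subgraph has chromatic number $m$, the restriction uses at least $m$ distinct colours there; call this colour set $C_S$.

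Let $N$ be the set of vertices outside $S$ that are complete to $S$. Vertices outside $S$ that are anticomplete to $S$ impose no constraints on $S$. Because $S\setminus\{v\}$ is nonempty (it has chromatic number $m\ge 1$), every vertex of $N$ is adjacent to some vertex of $S\setminus\{v\}$, and indeed to all of them. Hence no vertex of $N$ uses a colour in $C_S$.

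Now take an optimal $m$-colouring of $G[S]$ and map its $m$ colour classes injectively into $C_S$, which is possible since $|C_S|\ge m$. Keep the colouring $c$ on $V(G)\setminus S$. I check that this is proper. Edges inside $S$ are fine because we used a proper colouring of $G[S]$. Edges from $S$ to $V(G)\setminus S$ go only to $N$, whose colours avoid $C_S$. Edges outside $S$ are fine because $c$ was proper there.

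This gives a $(k-1)$-colouring of $G$, contradicting $\chi(G)=k$. Therefore $G[S]$ is $m$-vertex-critical with $m<k$. The only delicate point is the observation that $N$ is complete to $S\setminus\{v\}$, which needs $S\setminus\{v\}\neq\emptyset$. This holds whenever $m\ge 1$ is realised on $S\setminus\{v\}$. If $|S|=1$, the graph $G[S]$ is $K_1$, which is trivially $1$-vertex-critical.
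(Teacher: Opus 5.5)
Your proof is correct and complete. You fix $m=\chi(G[S])$ and take a $(k-1)$-colouring of $G-v$; the vertices complete to $S$ must avoid the at least $m$ colours used on $S\setminus\{v\}$, so you can recolour all of $S$ from those colours and obtain a $(k-1)$-colouring of $G$. You also handle the edge cases correctly: $S$ must be a proper subset, and $|S|=1$ is trivial.

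The paper gives no proof of this lemma and simply cites it from \cite{Hoang2015}. Your argument is the standard recolouring proof of this fact.
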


\section{Unavoidable induced subgraphs in large $k$-vertex critical graphs}\label{sec:unavoidablestructures}



We begin this section by proving a result on prime graphs and vertex-critical graphs.
We note that our result follows directly from Theorem 3.2 of~\cite{BelavadiHoang2026}, but we include its full proof here for completeness. 

\begin{lemma}\label{lem:finitekcolorableprimeimpliesfinitekcritical}
    Let $\mathcal{G}$ be a hereditary class of graphs and $k$ be a fixed non-negative integer. 
    If the order of every $k$-colourable prime graph in $\mathcal{G}$ is bounded by a constant, 
    then there are only finitely many $k$-vertex-critical graphs in $\mathcal{G}$.
\end{lemma}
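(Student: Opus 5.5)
The proof goes by strong induction on $k$: we show that every $k$-vertex-critical graph in $\mathcal{G}$ has order bounded by a function of $k$. This already gives finiteness, because there are only finitely many graphs of bounded order. One point to handle is that the hypothesis is stated only for the fixed $k$. We therefore first note that a bound for $k$-colourable prime graphs also bounds the $m$-colourable prime graphs for every $m\le k$, since these are in particular $k$-colourable. This monotonicity is what the induction uses for smaller critical graphs.

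Let $G\in\mathcal{G}$ be $k$-vertex-critical. Since $G$ is critical it is connected, and we may assume it has more than two vertices. If $G$ is prime, delete any vertex $v$. The graph $G-v$ is $(k-1)$-colourable, hence $k$-colourable, and it lies in $\mathcal{G}$ because the class is hereditary. But $G-v$ need not be prime, so this alone does not bound $|G|$. The better approach is to use the modular decomposition of $G$.

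Take the maximal strong modules $M_1,\dots,M_r$ of $G$. Because $G$ is connected, $\overline{G}$ is either disconnected or connected.

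If $\overline{G}$ is disconnected, then $G$ is the join of the subgraphs induced by the co-components. Each co-component is a homogeneous set, so by Lemma~\ref{lem:homosetscritical} each induces an $m_i$-vertex-critical graph with $m_i<k$. Each also lies in $\mathcal{G}$, so by induction each has bounded order. The number of co-components is at most $k$, since $\chi$ is additive over joins. Together these bound $|G|$.

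Otherwise both $G$ and $\overline{G}$ are connected, and the quotient $Q$ obtained by contracting each maximal strong module to a single vertex is prime. Moreover $Q$ is an induced subgraph of $G$ (take one representative from each module), so $Q\in\mathcal{G}$. Each $M_i$ is a proper homogeneous set, so by Lemma~\ref{lem:homosetscritical} it is $m_i$-vertex-critical with $m_i<k$, and by induction $|M_i|$ is bounded.

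The main obstacle is bounding $r=|V(Q)|$, because $Q$ need not be $k$-colourable; indeed $\chi(Q)$ could equal $k$. I would handle this by deleting one module. Pick $M_1$, and let $Q'$ be the graph obtained from $Q$ by deleting the vertex for $M_1$. Then $Q'$ is an induced subgraph of $G-v$ for $v\in M_1$, so $Q'$ is $(k-1)$-colourable and lies in $\mathcal{G}$. It may fail to be prime, however.

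Instead, I would choose a representative vertex $q$ of $Q$. Since $Q$ is prime with at least $4$ vertices, a known result says that deleting a suitable vertex, or a suitable pair of vertices, from a prime graph leaves a prime graph (the Schmerl–Trotter type result on critically prime graphs). So some induced subgraph $Q''$ of $Q$ on at least $|Q|-2$ vertices is prime. It avoids at least one vertex, and hence is an induced subgraph of $G-v$ for some $v$. Therefore $Q''$ is $(k-1)$-colourable, so $k$-colourable, and prime and in $\mathcal{G}$. By hypothesis $|Q''|\le c$, so $r\le c+2$.

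Combining the bounds on $r$ and on each $|M_i|$ bounds $|G|$ in terms of $k$, which completes the induction.
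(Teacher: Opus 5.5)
Your argument is correct, but its most elaborate step comes from a misreading of the hypothesis. A graph is $k$-colourable when $\chi\le k$, and a $k$-vertex-critical graph has $\chi(G)=k$. So $G$ itself, and every induced subgraph of $G$, is $k$-colourable. Your remark that ``$Q$ need not be $k$-colourable; indeed $\chi(Q)$ could equal $k$'' is therefore wrong. If $G$ is prime, the hypothesis gives $|V(G)|\le c$ at once. In general, the quotient $Q$ (one representative per maximal strong module) is an induced subgraph of $G$, hence a $k$-colourable prime graph in $\mathcal{G}$, so $r\le c$ with no further work. This is exactly how the paper bounds its quotient $H$, obtaining $|V(G)|\le N_k\cdot\max_{k'<k}M_{k'}$.

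Your Schmerl--Trotter detour is still valid. That theorem is stated for prime graphs on at least seven vertices rather than four, but smaller quotients are trivially bounded. Because the detour only uses that $Q''$ lies inside some $G-v$, it actually proves a slightly stronger lemma: a bound on the $(k-1)$-colourable prime graphs of $\mathcal{G}$ already suffices. For the statement as given, though, it is unnecessary machinery.

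In the other direction, your explicit modular decomposition is more careful than the paper's one-line reduction. When $\overline{G}$ is disconnected, the quotient is a complete graph, which is not prime once it has three or more vertices. The paper's ``delete all but one vertex in every nontrivial homogeneous set'' passes over this case. Your separate join case handles it cleanly: there are at most $k$ co-components, each lower-critical by Lemma~\ref{lem:homosetscritical}.
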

\begin{proof}
    The proof is by induction on $k$.
    For $k=1$, the result is trivial for all graph classes.
    Now suppose for all $1\le k'< k$ that if the order of every $k'$-colourable prime graph in $\mathcal{G}$ is bounded by a constant, 
    then there are only finitely many $k'$-vertex-critical graphs in $\mathcal{G}$.
    Let $N_{k'}$ be the maximum order of a $k'$-colourable prime graph in $\mathcal{G}$ and $M_{k'}$ be the maximum order of a $k'$-vertex-critical graph in $\mathcal{G}$.

    Now, suppose that $N_k$ is bounded by a constant.
    Since every $k$-colourable graphs is also $k'$-colourable, and $\mathcal{G}$ is a hereditary class, this implies that $N_{k'}$ is bounded by a constant for all $k'<k$.
    Thus, by the inductive hypothesis, there are only finitely many $k'$-vertex-critical graphs in $\mathcal{G}$ for all $1\le k'<k$, and therefore, $M_{k'}$ is bounded by a constant.
    Further, since $N_k$ is bounded by a constant, the order every prime $k$-vertex-critical graph in $\mathcal{G}$ is bounded by a constant. 
    Thus, it remains only to bound the order of every non-prime $k$-vertex-critical graph in $\mathcal{G}$.
    Let $G$ be a $k$-vertex-critical graph in $\mathcal{G}$ that is not prime, and let $H$ be the prime graph in $\mathcal{G}$ that is obtained from $G$ by deleting all but one vertex in every nontrivial homogeneous set in $G$.
    Since $G$ is $k$-colourable, it follows that $H$ is $k$-colourable and therefore has order at most $N_k$.
    From Lemma~\ref{lem:homosetscritical}, each nontrivial homogeneous set in $G$ must be $k'$-vertex-critical for some $k'<k$. 
    Thus, $|V(G)|\le N_{k}\cdot \max_{1\le k' < k}(M_{k'})$, and the order of every non-prime $k$-vertex-critical graph in $\mathcal{G}$ is bounded by a constant.
    Therefore, there are only finitely many $k$-vertex-critical graphs in $\mathcal{G}$.
     The result follows by induction.
    
\end{proof}

In~\cite{CHUDNOVSKY20161}, Chudnovsky, Kim, Oum, and Seymour give a list of unavoidable graphs in large prime graphs. Before stating their powerful theorem that we will use throughout, we need to give several definitions. See Figure~\ref{fig:unlabelledprimegraphs} for examples of some of the following graphs and their respective complements.

\begin{itemize}
    \item The \emph{thin spider} with $n$ legs is a split graph on $2n$ vertices consisting of a stable set $\{a_1, a_2,\ldots, a_n\}$ and a clique $\{b_1, b_2, \ldots, b_n\}$ such that $a_i \sim b_j$ if and only if $i =j$.
    \item The \emph{half-graph} of height $n$, denoted $H_n$, is a bipartite graph on $2n$ vertices $a_1, a_2, \ldots, a_n, b_1, b_2, \ldots, b_n$ such that  $a_i \sim b_j$ if and only if $i \geq j$..
    \item The graph $H_{n,I}'$ is obtained from $H_n$ by making $\{b_1, b_2, \ldots, b_n\}$ a clique and adding a new vertex adjacent to $a_1, a_2, \ldots, a_n$.
    \item The graph $H_{n}^{*}$ is obtained from $H_n$ by making $\{b_1, b_2, \ldots, b_n\}$ a clique and adding a new vertex adjacent to only $a_n$.
    \item A sequence of distinct vertices $v_0, v_1, \ldots, v_n$ is a \emph{chain} of length $n$ if for $i\geq 1$ we have that $v_{i-1}$ is the unique neighbour or the unique non-neigbhour of $v_i$ among the vertices preceding $v_i$ in the chain (i.e. the vertices $\{v_0, v_1, \dots, v_{i-1}\}$).
\end{itemize}

\begin{theorem}[\cite{CHUDNOVSKY20161}]\label{thm:unavoidableprimegraphs}
    For every integer $n\ge 3$, there exists a positive integer $N$ such that every prime graph with at least $N$ vertices contains one of the following graphs or their complements as an induced subgraph.
    \begin{enumerate}[label=\alph*)]
        \item The $1$-subdivision of $K_{1,n}$, denoted $K_{1,n}^{(1)}$.
        \item The line graph of $K_{2,n}$, denoted $L(K_{2,n})$.
        \item The thin spider with $n$ legs.
        \item The half-graph of height $n$, denoted $H_n$.
        \item The graph $H_{n,I}'$.
        \item The graph $H_{n}^{*}$.
        \item A prime graph induced by a chain of length $n$.
    \end{enumerate}
\end{theorem}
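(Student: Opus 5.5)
Since this theorem is imported from~\cite{CHUDNOVSKY20161} and is used here as a black box, what follows is only an outline of how one could prove it; the exact Ramsey bounds are not tracked. The strategy is to first show that every large prime graph contains a long chain, and then to use Ramsey-type arguments to clean up that chain. At that point either it already induces a prime graph (outcome g), or its non-primeness forces one of the structured configurations (a)--(f) or their complements.

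\emph{Step 1: growing prime subgraphs.} I would begin with the theorem of Schmerl and Trotter. It says that if $H$ is a prime induced subgraph of a prime graph $G$ with $|V(H)|\ge 4$ and $|V(H)|<|V(G)|$, then $G$ has a prime induced subgraph containing $H$ on $|V(H)|+1$ or $|V(H)|+2$ vertices.

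Starting from an induced $P_4$, which exists in every prime graph on at least $4$ vertices, this gives a nested sequence of prime induced subgraphs. Each new vertex (or pair of vertices) \emph{splits} the previous set: it is neither complete nor anticomplete to it.

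Alongside this I would use the elementary fact that primeness means every nontrivial vertex set is split by some outside vertex. This lets us build sequences $v_0,v_1,\dots$ in which each $v_i$ distinguishes some earlier vertices from others.

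\emph{Step 2: a dichotomy.} I would then aim to prove the following. Either there are many vertices whose neighbourhoods on a fixed large set behave in a ``star-like'' way, or one can extract a long chain.

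In the first case, Ramsey's theorem applied to the adjacency types among the splitting vertices and their private neighbours yields a stable or complete set of centres. These centres attach to distinct private vertices, giving $K_{1,n}^{(1)}$ or the thin spider (or complements). If instead pairs of vertices share patterns, one obtains $L(K_{2,n})$.

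In the second case, the splitting vertices attach \emph{monotonically}, with each one seeing a prefix of a linearly ordered set. Ramsey again makes the two sides stable or cliques, and this produces $H_n$, $H'_{n,I}$ or $H^*_n$ (or complements). The extra vertex in $H'_{n,I}$ and $H^*_n$ arises from the vertex that witnessed the first split.

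\emph{Step 3: handling a long chain.} Given a very long chain, I would colour each index $i$ by whether $v_{i-1}$ is the unique neighbour or the unique non-neighbour of $v_i$. I would also colour pairs $i<j$ by the adjacency of $v_i$ and $v_j$, and then pass to a monochromatic subsequence.

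The difficulty is that a subsequence of a chain is no longer a chain. One must therefore either show that a long segment of the original chain already induces a prime graph (outcome g), or show that a nontrivial homogeneous set inside the segment exhibits the monotone or star patterns of Step 2.

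This step is the main obstacle I expect: controlling how non-primeness can occur inside a chain while keeping the case analysis finite. My first attempt would be to show that a homogeneous set in a chain segment must be an interval of consecutive vertices, and then to iterate.
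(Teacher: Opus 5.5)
\paragraph{Verdict.} The paper does not prove this theorem. It imports it from Chudnovsky, Kim, Oum and Seymour and uses it as a black box, so there is no in-paper argument to compare your route against. Judged on its own, your outline is not a proof: it has genuine gaps exactly where the content of the theorem lies.

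\paragraph{Gap 1: nothing in your outline produces a chain.} A Schmerl--Trotter sequence, or any sequence in which each new vertex merely splits the earlier ones, lets $v_i$ be mixed on its predecessors in many places. A chain instead needs $v_i$ to be complete or anticomplete to $\{v_0,\dots,v_{i-2}\}$ and to disagree only at $v_{i-1}$.

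The missing idea is a level decomposition. Fix distinct $v_0,v_1$, put $S_0=\{v_0,v_1\}$, and let $S_{t+1}$ be $S_t$ together with every vertex mixed on $S_t$. When this stabilises the set is homogeneous, so primeness makes it $V(G)$. A vertex entering at stage $t$ is complete or anticomplete to $S_{t-2}$ and disagrees on some vertex that entered at stage $t-1$. Backtracking therefore gives a chain of length about $t$. So either there is a long chain or some level is huge, and the huge-level case is where (a)--(f) must come from.

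Your Step~2 only gestures at this. Its second branch is announced as ``extract a long chain'' but is then used to produce half-graphs, so the dichotomy as written is not coherent. Turning a huge level into (a)--(f) needs at least two things you do not supply:
\begin{itemize}
\item a Ramsey-type lemma for many vertices with pairwise distinct traces on a set, yielding an induced matching, co-matching or half-graph of size $n$;
\item an actual construction of the extra vertex.
\end{itemize}
The second point matters in several places. Ramsey on centres and private neighbours with both sides stable gives only $nK_2$, which is neither prime nor on the list, so the centre of $K_{1,n}^{(1)}$ is still missing. A half-graph with one side a clique and the other stable is $P_4$-free, hence not prime. The apex of $H'_{n,I}$ or $H^*_n$ must have exactly the stated attachments, and ``the vertex that witnessed the first split'' has no reason to have them.

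\paragraph{Gap 2: Step~3 rests on a false claim.} Homogeneous sets of a chain-induced graph need not be intervals. For $n\ge 4$, build the chain as follows:
\begin{itemize}
\item $v_1\sim v_0$;
\item for $2\le i\le n-2$, $v_i$ is adjacent, among its predecessors, exactly to $v_0,\dots,v_{i-2}$;
\item $v_{n-1}$ is adjacent only to $v_{n-2}$;
\item $v_n$ is adjacent exactly to $v_0,\dots,v_{n-2}$.
\end{itemize}
This is a chain, and $v_0,v_n$ are true twins, so $\{v_0,v_n\}$ is homogeneous.

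\paragraph{What is true about chains, and why it removes your ``main obstacle''.} The correct statement is short and makes Ramsey on subsequences unnecessary. Let $X$ be a nontrivial homogeneous set of $G[C]$, where $C=\{v_0,\dots,v_n\}$ and $n\ge 4$.
\begin{itemize}
\item $v_n\in X$. Otherwise let $m$ be the largest index in $X$; then $v_{m+1}$ disagrees between $v_m$ and every other vertex of $X$.
\item A short case analysis on the largest index outside $X$ shows that $X$ is one of $\{v_0,v_n\}$, $\{v_1,v_n\}$, $C\setminus\{v_0\}$, $C\setminus\{v_1\}$.
\end{itemize}
Write $d_i$ for the common relation of $v_i$ to each of $v_0,\dots,v_{i-2}$. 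When $n\ge 5$, each of the four cases forces $d_3=d_n$, so $v_3$ relates to $v_2$ by the opposite of $d_n$. But each of the four analogous possibilities for the suffix chain $v_2,\dots,v_n$ would require $v_3$ to relate to $v_2$ by $d_n$.

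Hence for $n\ge 6$, either the chain or its suffix $v_2,\dots,v_n$ induces a prime graph. A long chain therefore gives outcome~(g) at once. All the real difficulty sits in the bounded-depth, huge-level case, which your outline leaves unproved.
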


\begin{figure}[!h]
\def\c{0.65}

\centering
\subfigure[$K_{1,5}^{(1)}$]{
\scalebox{\c}{
\begin{tikzpicture}[
    vertex/.style={circle,fill=black,inner sep=1.6pt},
    edge/.style={thin}
]
\begin{scope}[shift={(0,0)}]
\node[vertex] (x) at (-0.4,0.75) {};

\foreach \i in {1,2,3,4,5}{
    \pgfmathsetmacro{\y}{3-\i*0.75}
    \node[vertex] (a\i) at (0.8,\y) {};
    \node[vertex] (b\i) at (2.2,\y) {};
    \draw[edge] (x)--(a\i)--(b\i);
}

\end{scope}
\end{tikzpicture}}
}
\qquad
\subfigure[$\overline{K_{1,5}^{(1)}}$]{
\scalebox{\c}{
\begin{tikzpicture}[
    vertex/.style={circle,fill=black,inner sep=1.6pt},
    edge/.style={thin}
]
\begin{scope}[shift={(0,0)}]
\node[vertex] (x) at (-0.4,0.75) {};

\foreach \i in {1,...,5}{
    \pgfmathsetmacro{\y}{3-\i*0.75}
    \node[vertex] (v\i) at (0.8,\y) {};
    \node[vertex] (u\i) at (2.2,\y) {};
    \draw[edge] (x)--(v\i);
}

\foreach \i in {1,...,4}{
    \pgfmathsetmacro{\ip}{\i+1}
    \foreach \j in {\ip,...,5}{
        \draw (u\i)--(v\j);
    }
}

\foreach \i in {2,...,5}{
    \pgfmathsetmacro{\im}{\i-1}
    \foreach \j in {1,...,\im}{
        \draw (u\i)--(v\j);
    }
}

\foreach \i in {1,...,5}{
    \pgfmathsetmacro{\ip}{\i+1}
    \foreach \j in {\i,...,5}{
        \draw (u\i) to [bend left=40] (u\j);
        \draw (v\i) to [bend right=40] (v\j);
    }
}

\end{scope}
\end{tikzpicture}}
}
\qquad
\subfigure[$L(K_{2,5})$]{
\scalebox{\c}{
\begin{tikzpicture}[
    vertex/.style={circle,fill=black,inner sep=1.6pt},
    edge/.style={thin}
]

\foreach \i in {1,...,5}{
    \pgfmathsetmacro{\y}{3-\i*0.75}
    \node[vertex] (a\i) at (0.8,\y) {};
    \node[vertex] (b\i) at (2.2,\y) {};
    \draw[edge] (a\i)--(b\i);
}

\foreach \i in {1,...,5}{
    \foreach \j in {\i,...,5}{
        \draw (b\i) to [bend left=40] (b\j);
        \draw (a\i) to [bend right=40] (a\j);
    }
}
\end{tikzpicture}}
}
\qquad
\subfigure[$\overline{L(K_{2,5})}$]{
\scalebox{\c}{
\begin{tikzpicture}[
    vertex/.style={circle,fill=black,inner sep=1.6pt},
    edge/.style={thin}
]

\foreach \i in {1,...,5}{
    \pgfmathsetmacro{\y}{3-\i*0.75}
    \node[vertex] (a\i) at (0.8,\y) {};
    \node[vertex] (b\i) at (2.2,\y) {};
}

\foreach \i in {1,...,4}{
    \pgfmathsetmacro{\ip}{\i+1}
    \foreach \j in {\ip,...,5}{
        \draw (a\i)--(b\j);
    }
}

\foreach \i in {2,...,5}{
    \pgfmathsetmacro{\im}{\i-1}
    \foreach \j in {1,...,\im}{
        \draw (a\i)--(b\j);
    }
}
\end{tikzpicture}}
}
\qquad
\subfigure[$K_n^*$]{
\scalebox{\c}{
\begin{tikzpicture}[
    vertex/.style={circle,fill=black,inner sep=1.6pt},
    edge/.style={thin}
]

\foreach \i in {1,...,5}{
    \pgfmathsetmacro{\y}{3-\i*0.75}
    \node[vertex] (a\i) at (0.8,\y) {};
    \node[vertex] (b\i) at (2.2,\y) {};
    \draw[edge] (a\i)--(b\i);
}

\foreach \i in {1,...,5}{
    \foreach \j in {\i,...,5}{
        \draw (b\i) to [bend left=40] (b\j);
    }
}
\end{tikzpicture}}
}
\qquad
\subfigure[$\overline{K_n^*}$]{
\scalebox{\c}{
\begin{tikzpicture}[
    vertex/.style={circle,fill=black,inner sep=1.6pt},
    edge/.style={thin}
]

\foreach \i in {1,...,5}{
    \pgfmathsetmacro{\y}{3-\i*0.75}
    \node[vertex] (a\i) at (0.8,\y) {};
    \node[vertex] (b\i) at (2.2,\y) {};
}

\foreach \i in {1,...,5}{
    \foreach \j in {\i,...,5}{
        \draw (b\i) to [bend left=40] (b\j);
    }
}

\foreach \i in {1,...,4}{
    \pgfmathsetmacro{\ip}{\i+1}
    \foreach \j in {\ip,...,5}{
        \draw (a\i)--(b\j);
    }
}

\foreach \i in {2,...,5}{
    \pgfmathsetmacro{\im}{\i-1}
    \foreach \j in {1,...,\im}{
        \draw (a\i)--(b\j);
    }
}
\end{tikzpicture}}
}
\qquad
\subfigure[$H_5$]{
\scalebox{\c}{
\begin{tikzpicture}[
    vertex/.style={circle,fill=black,inner sep=1.6pt},
    edge/.style={thin}
]

\foreach \i in {1,...,5}{
    \pgfmathsetmacro{\y}{3-\i*0.75}
    \node[vertex] (a\i) at (0.8,\y) {};
    \node[vertex] (b\i) at (2.2,\y) {};
}

\foreach \i in {1,...,5}{
    \foreach \j in {\i,...,5}{
        \draw (b\i)--(a\j);
    }
}
\end{tikzpicture}}
}
\qquad
\subfigure[$\overline{H_5}$]{
\scalebox{\c}{
\begin{tikzpicture}[
    vertex/.style={circle,fill=black,inner sep=1.6pt},
    edge/.style={thin}
]

\foreach \i in {1,...,5}{
    \pgfmathsetmacro{\y}{3-\i*0.75}
    \node[vertex] (a\i) at (0.8,\y) {};
    \node[vertex] (b\i) at (2.2,\y) {};
}

\foreach \i in {1,...,5}{
    \foreach \j in {\i,...,5}{
        \draw (b\i) to [bend left=40] (b\j);
        \draw (a\i) to [bend right=40] (a\j);
    }
}

\foreach \i in {1,...,4}{
    \pgfmathsetmacro{\ip}{\i+1}
    \foreach \j in {\ip,...,5}{
        \draw (a\i)--(b\j);
    }
}
\end{tikzpicture}}
}
\qquad
\subfigure[$H_{n,I}'$]{
\scalebox{\c}{
\begin{tikzpicture}[
    vertex/.style={circle,fill=black,inner sep=1.6pt},
    edge/.style={thin}
]

\node[vertex] (x) at (-0.4,0.75) {};

\foreach \i in {1,2,3,4,5}{
    \pgfmathsetmacro{\y}{3-\i*0.75}
    \node[vertex] (a\i) at (0.8,\y) {};
    \node[vertex] (b\i) at (2.2,\y) {};
    \draw[edge] (x)--(a\i);
}

\foreach \i in {1,...,5}{
    \foreach \j in {\i,...,5}{
        \draw (b\i) to [bend left=40] (b\j);
    }
}

\foreach \i in {1,...,5}{
    \foreach \j in {\i,...,5}{
        \draw (b\i)--(a\j);
    }
}
\end{tikzpicture}}
}
\qquad
\subfigure[$\overline{H_{n,I}'}$]{
\scalebox{\c}{
\begin{tikzpicture}[
    vertex/.style={circle,fill=black,inner sep=1.6pt},
    edge/.style={thin}
]
\node[vertex] (x) at (3.6,0.75) {};

\foreach \i in {1,2,3,4,5}{
    \pgfmathsetmacro{\y}{3-\i*0.75}
    \node[vertex] (a\i) at (0.8,\y) {};
    \node[vertex] (b\i) at (2.2,\y) {};
    \draw[edge] (x)--(b\i);
}

\foreach \i in {1,...,5}{
    \foreach \j in {\i,...,5}{
        \draw (a\i) to [bend right=40] (a\j);
    }
}

\foreach \i in {1,...,4}{
    \pgfmathsetmacro{\ip}{\i+1}
    \foreach \j in {\ip,...,5}{
        \draw (a\i)--(b\j);
    }
}
\end{tikzpicture}}
}
\qquad
\subfigure[$H_5^*$]{
\scalebox{\c}{
\begin{tikzpicture}[
    vertex/.style={circle,fill=black,inner sep=1.6pt},
    edge/.style={thin}
]

\node[vertex] (x) at (-0.4,1.1) {};

\foreach \i in {1,2,3,4,5}{
    \pgfmathsetmacro{\y}{3-\i*0.75}
    \node[vertex] (a\i) at (0.8,\y) {};
    \node[vertex] (b\i) at (2.2,\y) {};

}
\draw[edge] (x)--(a5);
\foreach \i in {1,...,5}{
    \foreach \j in {\i,...,5}{
        \draw (b\i) to [bend left=40] (b\j);
    }
}

\foreach \i in {1,...,5}{
    \foreach \j in {\i,...,5}{
        \draw (b\i)--(a\j);
    }
}
\end{tikzpicture}}
}
\qquad
\subfigure[$\overline{H_5^*}$]{
\scalebox{\c}{
\begin{tikzpicture}[
    vertex/.style={circle,fill=black,inner sep=1.6pt},
    edge/.style={thin}
]
\node[vertex] (x) at (-0.4,1.1) {};

\foreach \i in {1,2,3,4,5}{
    \pgfmathsetmacro{\y}{3-\i*0.75}
    \node[vertex] (a\i) at (0.8,\y) {};
    \node[vertex] (b\i) at (2.2,\y) {};
    \draw[edge] (x)--(b\i);
}

\foreach \i in {1,...,4}{
    \draw[edge] (x)--(a\i);
}

\foreach \i in {1,...,5}{
    \foreach \j in {\i,...,5}{
        \draw (a\i) to [bend right=40] (a\j);
    }
}

\foreach \i in {1,...,4}{
    \pgfmathsetmacro{\ip}{\i+1}
    \foreach \j in {\ip,...,5}{
        \draw (a\i)--(b\j);
    }
}
\end{tikzpicture}}
}
\caption{Examples of graphs and their complements listed in Theorem \ref{thm:unavoidableprimegraphs}}\label{fig:unlabelledprimegraphs}
\end{figure}

Note that several graphs in Theorem \ref{thm:unavoidableprimegraphs} contain $K_n$ as an induced subgraph. One notable exception is a chain. However this next lemma shows long chains contain either a path or clique.

\begin{lemma}\label{lem:chainhaspathorclique}
    Every chain of length $m\ge 2nk$ contains an induced $P_n$ or an induced $K_k$.
\end{lemma}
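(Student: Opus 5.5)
The plan is to classify each index $i\in\{1,\dots,m\}$ of the chain $v_0,v_1,\dots,v_m$ by which clause of the definition it satisfies. Call $i$ of \emph{type N} if $v_{i-1}$ is the unique neighbour of $v_i$ among $v_0,\dots,v_{i-1}$. Call $i$ of \emph{type A} if $v_{i-1}$ is the unique non-neighbour of $v_i$ among these vertices. For $i=1$ only $v_0$ precedes $v_1$, so the type is simply determined by whether $v_0\sim v_1$. Every index then receives exactly one type. We show that many type-A indices give a clique and that, otherwise, a long run of type-N indices gives an induced path.

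\emph{Clique from type A.} Let $i<i'$ both be of type A with $i'\ge i+2$. Then $v_i$ precedes $v_{i'}$ and $v_i\neq v_{i'-1}$. Since $v_{i'-1}$ is the only earlier vertex not adjacent to $v_{i'}$, we get $v_i\sim v_{i'}$. Hence any set of type-A indices that are pairwise at distance at least $2$ yields a clique. If there are at least $2k-1$ type-A indices, list them in increasing order and take every other one. This gives $k$ pairwise non-consecutive indices, and therefore an induced $K_k$.

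\emph{Path from type N.} Otherwise there are at most $2k-2$ type-A indices. These split the remaining at least $m-2k+2$ type-N indices into at most $2k-1$ maximal runs of consecutive integers. By pigeonhole, some run has length at least $(m-2k+2)/(2k-1)$. Using $m\ge 2nk$, this is at least $(2nk-2k+2)/(2k-1)>n-1$, so the run has length at least $n-1$. Let $j,j+1,\dots,j+r$ be such a run, with $r+1\ge n-1$ and $j\ge 1$. Consider the vertices $v_{j-1},v_j,\dots,v_{j+r}$. For each $i$ with $j\le i\le j+r$, the vertex $v_i$ is adjacent to $v_{i-1}$ and to no other earlier vertex, in particular to no other vertex of this list that precedes it. So these $r+2\ge n$ vertices induce a path, and $G$ contains an induced $P_n$.

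The only delicate point is the bookkeeping in the path case. The run must contain at least $n-1$ indices so that, together with the predecessor $v_{j-1}$, it gives $n$ vertices. We also need $j\ge 1$ so that $v_{j-1}$ exists, which holds because indices start at $1$. Checking the inequality $(m-2k+2)/(2k-1)\ge n-1$ for $m\ge 2nk$ is routine. The type-A clique argument requires only the non-consecutiveness observation above.
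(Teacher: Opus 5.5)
Your proof is correct and follows essentially the paper's route. Both rest on the same two facts: indices where $v_{i-1}$ is the unique non-neighbour give pairwise adjacent vertices whenever no two of them are consecutive, and a run of indices where $v_{i-1}$ is the unique neighbour gives an induced path. The paper first rules out a long such run and then forces a non-neighbour-type index into each of $2k$ blocks of $n$ consecutive indices; your global count of type-A indices, with alternate selection followed by pigeonhole, is the same dichotomy and handles the off-by-one bookkeeping more cleanly.
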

\begin{proof}
    Let $G$ be a graph induced by a chain of length $m\ge 2nk$ with vertices labeled $v_0$ to $v_m$ as in the definition of a chain above.
    Suppose for some $i\in\{1,2,\dots , m-n \}$, we have, for each $j\in \{i,i+1,i+2,\dots, i+n\}$, that $v_{j-1}$ is the unique neighbour of $v_{j}$ in $\{v_0,v_1,\dots v_{j-1}\}$. 
    Then  $\{v_{i-1},v_{i},v_{i+1},\dots, v_{i+n-1}\}$ induces a $P_n$ in $G$.
    So we may suppose that this never occurs.
    
    For each $i\in\{0,1,2,\dots 2k-1\}$, define the subsets $V_i=\{v_{ni+1},v_{ni+2}\dots v_{n(i+1)}\}$ of $V(G)$.
    Thus, for each $i\in\{0,1,2,\dots 2k-1\}$, we must have a vertex $v_{i'}\in V_i$ such that $v_{i'-1}$ is the unique non-neighbour of $v_{i'}$ in the set $\{v_0,v_1,\dots v_{i'-2},v_{i'-1}\}$.
    Thus, for all $i\in\{0,1,2,\dots 2k-1\}$, $v_{i'}\sim v_{\ell'}$ for all $\ell < i-1$.
    Further, $v_{i'}\nsim v_{i'-1}$ if and only if $i'=ni+1$ and $i'-1 = ni$.
    However, we can have $i'=ni+1$ and $i'-1 = ni$ for at most $k$ values of $i$ since we then necessarily have $i'-1\neq n(i-1)+1$ and $i'\neq n(i+1)$.
    Thus, there is a subset $I\subseteq \{0,1,2\dots 2k-1\}$ such that $|I|\ge k$ and for all $i_1,i_2\in I$ we have $v_{i_1'}\sim v_{i_2'}$. 
    Thus, $G$ contains an induced $K_k$.
\end{proof}

Lemma \ref{lem:chainhaspathorclique} and the fact that all but $K_{1,2nk}^{(1)}$, $\overline{L(K_{2,2nk})}$, and $H_{2nk}$ contain an induced $K_k$ give the following immediate corollary of Theorem \ref{thm:unavoidableprimegraphs}.

\begin{corollary}\label{cor:unavoidableprimegraphs}
    For any integers $k$ and $n\geq 3$, there are only finitely many prime graph which do not contain any of $K_{1,2nk}^{(1)}$, $\overline{L(K_{2,2nk})}$, $H_{2nk}$, $P_n$, nor $K_k$ as an induced subgraph.
\end{corollary}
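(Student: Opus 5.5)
We apply Theorem~\ref{thm:unavoidableprimegraphs} with parameter $m=2nk$ (taking $m\ge 3$, which holds since $n\ge 3$ and $k\ge 1$). This yields an $N$ such that every prime graph on at least $N$ vertices contains one of the graphs (a)--(g), or a complement of one of them, with parameter $m$. It then suffices to show that each of these possibilities contains one of $K_{1,2nk}^{(1)}$, $\overline{L(K_{2,2nk})}$, $H_{2nk}$, $P_n$ or $K_k$ as an induced subgraph. Granting this, a prime graph avoiding all five has fewer than $N$ vertices, so there are only finitely many such graphs up to isomorphism. Several of the five outcomes are obtained directly: $K_{1,m}^{(1)}$ is one of the forbidden graphs, as are $\overline{L(K_{2,m})}$ and $H_m$. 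A prime chain of length $m=2nk$ contains an induced $P_n$ or $K_k$ by Lemma~\ref{lem:chainhaspathorclique}. The complement of a chain is again a chain, since "unique neighbour" and "unique non-neighbour" swap under complementation, so the same lemma covers that case too.

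For every remaining graph we exhibit a clique of size at least $k$; since $m=2nk\ge k$, a clique on $m$ vertices is enough. We check each case in turn.
\begin{enumerate}
\item $\overline{K_{1,m}^{(1)}}$: the $m$ leaves are pairwise adjacent in the complement.
\item $L(K_{2,m})$: it contains two cliques of size $m$.
\item The thin spider and its complement: the spider has the clique $\{b_i\}$, and in the complement the stable set $\{a_i\}$ becomes a clique.
\item $\overline{H_m}$: both sides become cliques.
\item $H_{m,I}'$ and $H_m^*$: the set $\{b_i\}$ is made a clique.
\item $\overline{H_{m,I}'}$ and $\overline{H_m^*}$: the stable set $\{a_i\}$ of the original graph becomes a clique in the complement.
\end{enumerate}
Each of these cases therefore contains an induced $K_k$.

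The argument has no real obstacle; the only care needed is to confirm that the case list is exhaustive and that each clique genuinely has size $m\ge k$. The one less trivial point is that the complement of a chain is a chain, which follows at once from the definition.
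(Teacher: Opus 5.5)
Your proposal is correct and follows the paper's argument. You apply Theorem~\ref{thm:unavoidableprimegraphs} with parameter $2nk$; the graphs $K_{1,2nk}^{(1)}$, $\overline{L(K_{2,2nk})}$ and $H_{2nk}$ are excluded directly, every other listed graph contains a clique of size $2nk\ge k$, and chains are handled by Lemma~\ref{lem:chainhaspathorclique}. Your observation that the complement of a chain is again a chain, so the lemma also covers that outcome, is a step the paper leaves implicit.
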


\begin{table}[h!]
    \centering
    \begin{tabular}{|c|c|c|}
    \hline
    Graph & Induced Subgraph & Vertex Set \\ \hline
                    & $P_5$ & $\{b_1,a_1, x, a_2, b_2 \}$ \\
    $K_{1,n}^{(1)}$ & $P_5+(n-2)P_1$ & $\{b_1,a_1, x, a_2, b_2, b_3, \ldots, b_n \}$ \\
                    & $P_4+(n-2)P_1$ & $\{b_1,a_1, x, a_2, b_3, \ldots, b_n \}$\\ \hline
                            & $P_5$ & $\{a_2,b_1, a_3, b_2, a_1 \}$ \\
    $\overline{L(K_{2,n})}$ & $C_6$ & $\{a_2,b_1, a_3, b_2, a_1, b_3 \}$ \\ \hline
          & $P_4+(n-2)P_1$ & $\{a_1,b_1, a_2, b_2, b_3, \ldots, b_n \}$\\
    $H_n$ & $C_4+(n-3)P_1$ & $\{b_1, a_2, b_2,a_3, b_4, \ldots, b_n \}$\\ 
          \hline
   
    \end{tabular}
    \caption{Induced subgraphs in $K_{1,n}^{(1)}$, $\overline{L(K_{2,n})}$, and $H_n$}
    \label{tab:subgraphs}
\end{table}



\setcounter{subfigure}{0}
\begin{figure}[!h]
\def\c{1}

\centering
\subfigure[$K_{1,5}^{(1)}$]{
\scalebox{\c}{
\begin{tikzpicture}[
    vertex/.style={circle,fill=black,inner sep=1.6pt},
    edge/.style={thin}
]
\begin{scope}[shift={(0,0)}]
\node[vertex, label=left:$x$] (x) at (-0.4,0.75) {};

\foreach \i in {1,2,3,4}{
    \pgfmathsetmacro{\y}{3-\i*0.75}
    \node[vertex, label=below:$a_{\i}$] (a\i) at (0.8,\y) {};
    \node[vertex, label=right:$b_{\i}$] (b\i) at (2.2,\y) {};
    \draw[edge] (x)--(a\i)--(b\i);
}

\foreach \i in {5}{
    \pgfmathsetmacro{\y}{3-\i*0.75}
    \node[vertex, label=left:$a_{\i}$] (a\i) at (0.8,\y) {};
    \node[vertex, label=right:$b_{\i}$] (b\i) at (2.2,\y) {};
    \draw[edge] (x)--(a\i)--(b\i);
}

\end{scope}
\end{tikzpicture}}
}
\qquad
\subfigure[$\overline{L(K_{2,5})}$]{
\scalebox{\c}{
\begin{tikzpicture}[
    vertex/.style={circle,fill=black,inner sep=1.6pt},
    edge/.style={thin}
]

\foreach \i in {1,...,5}{
    \pgfmathsetmacro{\y}{3-\i*0.75}
    \node[vertex, label=left:$a_{\i}$] (a\i) at (0.8,\y) {};
    \node[vertex, label=right:$b_{\i}$] (b\i) at (2.2,\y) {};
}

\foreach \i in {1,...,4}{
    \pgfmathsetmacro{\ip}{\i+1}
    \foreach \j in {\ip,...,5}{
        \draw (a\i)--(b\j);
    }
}

\foreach \i in {2,...,5}{
    \pgfmathsetmacro{\im}{\i-1}
    \foreach \j in {1,...,\im}{
        \draw (a\i)--(b\j);
    }
}
\end{tikzpicture}}
}
\qquad
\subfigure[$H_5$]{\label{subfig:halfgraph5labelled}
\scalebox{\c}{
\begin{tikzpicture}[
    vertex/.style={circle,fill=black,inner sep=1.6pt},
    edge/.style={thin}
]

\foreach \i in {1,...,5}{
    \pgfmathsetmacro{\y}{3-\i*0.75}
    \node[vertex, label=left:$a_{\i}$] (a\i) at (0.8,\y) {};
    \node[vertex, label=right:$b_{\i}$] (b\i) at (2.2,\y) {};
}

\foreach \i in {1,...,5}{
    \foreach \j in {\i,...,5}{
        \draw (b\i)--(a\j);
    }
}
\end{tikzpicture}}
}
\caption{Three labeled graphs from Corollary \ref{cor:unavoidableprimegraphs}}\label{fig:labelledprime}
\end{figure}

Consider the labeled examples of $K_{1,n}^{(1)}$, $\overline{L(K_{2,n})}$, and $H_n$ for $n=5$ in Figure \ref{fig:labelledprime}. Table \ref{tab:subgraphs} contains a list of common induced subgraphs in each graph together with a vertex set which induces the subgraph. 
The following theorems now follow almost directly

\begin{theorem}\label{thm:P5Hn}
    For all $k,n\ge 1$, $\numcrit{k}{P_5,H_{n}}<\infty$.
\end{theorem}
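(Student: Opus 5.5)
We combine Lemma~\ref{lem:finitekcolorableprimeimpliesfinitekcritical} with Corollary~\ref{cor:unavoidableprimegraphs}, using Table~\ref{tab:subgraphs} to remove the first two obstructions. Fix $k,n\ge 1$ and let $\mathcal{G}$ be the hereditary class of $(P_5,H_n)$-free graphs. By Lemma~\ref{lem:finitekcolorableprimeimpliesfinitekcritical}, it suffices to bound the order of every $k$-colourable prime graph in $\mathcal{G}$.

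Every $k$-colourable graph is $K_{k+1}$-free. Since $H_m$ is an induced subgraph of $H_{m'}$ whenever $m\le m'$, we may enlarge $n$ and assume $n\ge 5$ without loss of generality.

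We then apply Corollary~\ref{cor:unavoidableprimegraphs} with path parameter $5$ and clique parameter $k+1$. Set $N=2\cdot 5\cdot(k+1)$. The corollary gives that only finitely many prime graphs avoid all of $K_{1,N}^{(1)}$, $\overline{L(K_{2,N})}$, $H_N$, $P_5$ and $K_{k+1}$ as induced subgraphs. It remains to check that every $k$-colourable prime graph in $\mathcal{G}$ avoids each of these five graphs:
\begin{itemize}
\item $P_5$ is forbidden by definition of $\mathcal{G}$.
\item $K_{k+1}$ is excluded by $k$-colourability.
\item $K_{1,N}^{(1)}$ and $\overline{L(K_{2,N})}$ both contain an induced $P_5$, by the vertex sets listed in Table~\ref{tab:subgraphs}.
\item $H_N$ contains $H_n$ as an induced subgraph, since we may take $N\ge n$ (increase $N$ if necessary; the corollary's conclusion persists because forbidding larger graphs is weaker, and we can instead apply it with parameters enlarged to $\max(n,5)$).
\end{itemize}
Hence all $k$-colourable prime graphs in $\mathcal{G}$ lie in a finite set, so their order is bounded by a constant, and the lemma gives the finiteness of $k$-vertex-critical graphs in $\mathcal{G}$.

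The only real care needed is matching parameters. The corollary requires its path parameter to be at least $3$, and we need $H_{2\cdot 5(k+1)}\supseteq H_n$. If $n>10(k+1)$, we instead apply the corollary with path parameter $n'=\max(5,n)$, so that it forbids $P_{n'}$. This is still excluded, since $P_{n'}$ contains $P_5$. The half-graph index $2n'(k+1)\ge n$ also holds, and the remaining checks are unchanged. There is no substantive obstacle; the statement follows directly from the prior results.
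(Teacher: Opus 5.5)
Your proposal is correct and follows the paper's proof essentially verbatim. You reduce via Lemma~\ref{lem:finitekcolorableprimeimpliesfinitekcritical} to $k$-colourable prime graphs, apply Corollary~\ref{cor:unavoidableprimegraphs} with path parameter at least $5$ and clique parameter $k+1$ (making the half-graph index at least $n$), and use Table~\ref{tab:subgraphs} to dispose of $K_{1,N}^{(1)}$ and $\overline{L(K_{2,N})}$. (Your parenthetical that the corollary's conclusion ``persists because forbidding larger graphs is weaker'' has the logic backwards, but the fix you actually use, enlarging the path parameter to $\max(5,n)$, matches the paper's choice of $m\ge 5$, $r\ge k+1$ with $2mr\ge n$.)
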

\begin{proof}
Consider the collection of all $(P_5, H_n)$-free $k$-critical graphs for some fixed $k,n \geq 1$.
From Lemma~\ref{lem:finitekcolorableprimeimpliesfinitekcritical}, it suffices to show there are only finitely many $k$-colourable $(P_5, H_n)$-free prime graphs.
From Corollary \ref{cor:unavoidableprimegraphs}, there are only finitely many prime graphs that forbid $K_{1,2mr}^{(1)}$, $\overline{L(K_{2,2mr})}$, $H_{2mr}$, $P_{m}$, and $K_r$ as an induced subgraph.
So, choose $m \geq 5$ and $r\geq k+1$ such that $2mr \geq n$.
From Table \ref{tab:subgraphs} we have that $K_{1,2mk}^{(1)}$, $\overline{L(K_{2,2mk})}$, and $P_{m}$ each contain an induced $P_5$.
Moreover $H_{2mk}$ contains an induced $H_n$ and no $k$-colourable graph contains an induced $K_r$.
Therefore there are only finitely many $k$-colourable $(P_5, H_n)$-free prime graphs.   
\end{proof}

The following two corollaries follow immediately from Theorem~\ref{thm:P5Hn} and Table~\ref{tab:subgraphs}. 

\begin{corollary}\label{cor:P5P4+ellP1}
    For all $k,\ell\ge 1$, $\numcrit{k}{P_5,P_4+\ell P_1}<\infty$.
\end{corollary}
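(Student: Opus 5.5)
The plan is to derive Corollary~\ref{cor:P5P4+ellP1} directly from Theorem~\ref{thm:P5Hn}. The key observation is that the class of $(P_5,P_4+\ell P_1)$-free graphs is contained in the class of $(P_5,H_n)$-free graphs for a suitable $n$ depending on $\ell$. Indeed, Table~\ref{tab:subgraphs} records that $H_n$ contains an induced $P_4+(n-2)P_1$, namely on the vertex set $\{a_1,b_1,a_2,b_2,b_3,\dots,b_n\}$.

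I would check this by hand from the definition of $H_n$, where $a_i\sim b_j$ if and only if $i\ge j$ and both sides are stable.
\begin{itemize}
\item The edges among $a_1,b_1,a_2,b_2$ are $a_1b_1$, $a_2b_1$ and $a_2b_2$, while $a_1\nsim b_2$. So $b_2,a_2,b_1,a_1$ induces a $P_4$.
\item For $j\ge 3$, the vertex $b_j$ has no neighbour among $a_1,a_2$, since $j>2$.
\item The $b_j$ are pairwise nonadjacent and nonadjacent to $b_1,b_2$, because $\{b_1,\dots,b_n\}$ is stable.
\end{itemize}
Hence $\{a_1,b_1,a_2,b_2,b_3,\dots,b_n\}$ induces $P_4+(n-2)P_1$. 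Taking only $\ell$ of the $b_j$ with $j\ge 3$ shows that $H_{\ell+2}$ contains an induced $P_4+\ell P_1$.

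Now set $n=\ell+2$. Suppose $G$ is $(P_5,P_4+\ell P_1)$-free. Then $G$ is also $H_n$-free, since an induced $H_n$ would contain an induced $P_4+\ell P_1$. Therefore every $k$-vertex-critical $(P_5,P_4+\ell P_1)$-free graph is a $k$-vertex-critical $(P_5,H_{\ell+2})$-free graph, and so
\[
\numcrit{k}{P_5,P_4+\ell P_1}\le \numcrit{k}{P_5,H_{\ell+2}}<\infty
\]
by Theorem~\ref{thm:P5Hn}.

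There is no real obstacle here. The only point needing care is verifying the induced-subgraph claim from Table~\ref{tab:subgraphs}, in particular that $a_1\nsim b_2$ and that the extra $b_j$ are isolated from the path. If one prefers, the case $\ell=0$, that is $P_4$, is covered as well, since $P_4$ is an induced subgraph of $H_2$.
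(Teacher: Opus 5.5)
Your proposal is correct and matches the paper's argument: the paper derives this corollary immediately from Theorem~\ref{thm:P5Hn} together with the entry in Table~\ref{tab:subgraphs} showing that $H_n$ contains an induced $P_4+(n-2)P_1$. You verified that induced subgraph directly and applied the theorem with $n=\ell+2$.
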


\begin{corollary}\label{cor:P5C4+ellP1}
    For all $k,\ell\ge 1$, $\numcrit{k}{P_5,C_4+\ell P_1}<\infty$.
\end{corollary}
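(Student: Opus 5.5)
The plan is to reduce Corollary~\ref{cor:P5C4+ellP1} directly to Theorem~\ref{thm:P5Hn} by exhibiting $C_4+\ell P_1$ as an induced subgraph of a half-graph. Once we have $n$ with $C_4+\ell P_1$ an induced subgraph of $H_n$, every $(P_5,C_4+\ell P_1)$-free graph is also $(P_5,H_n)$-free. Hence $\critset{k}{P_5,C_4+\ell P_1}\subseteq\critset{k}{P_5,H_n}$, and the latter set is finite by Theorem~\ref{thm:P5Hn}.

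The key step is to check the embedding listed in Table~\ref{tab:subgraphs}. Take $n=\ell+3$, and in $H_n$ (where $a_i\sim b_j$ iff $i\ge j$) consider the set $\{b_1,a_2,b_2,a_3,b_4,\dots,b_n\}$.
\begin{itemize}
\item Among $b_1,a_2,b_2,a_3$: we have $a_2\sim b_1$, $a_2\sim b_2$, $a_3\sim b_1$ and $a_3\sim b_2$. There are no $a$--$a$ or $b$--$b$ edges, so these four vertices induce $C_4=K_{2,2}$.
\item For $j\ge 4$, $b_j$ is adjacent only to vertices $a_i$ with $i\ge j\ge 4$. So $b_j$ is nonadjacent to $a_2$ and $a_3$, and also to all other $b$'s.
\end{itemize}
Thus $b_4,\dots,b_n$ are $n-3=\ell$ isolated vertices, and the set induces $C_4+\ell P_1$. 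The boundary case $\ell=0$ (if one wished to include it) is handled the same way with $n=3$.

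The only point needing care is confirming these adjacencies under the paper's convention for $H_n$. There is no real obstacle: the argument is a containment of forbidden-subgraph classes followed by an appeal to Theorem~\ref{thm:P5Hn}.
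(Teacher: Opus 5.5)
Your proposal is correct and follows the same route as the paper, which derives the corollary from Theorem~\ref{thm:P5Hn} together with the embedding $\{b_1,a_2,b_2,a_3,b_4,\dots,b_n\}$ of $C_4+(n-3)P_1$ in $H_n$ listed in Table~\ref{tab:subgraphs}. Taking $n=\ell+3$ and checking the adjacencies explicitly, as you do, is exactly the verification the paper leaves implicit.
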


The above two corollaries are notable as for $\ell=1$ they resolve two of the five remaining open cases of Problem~\ref{prob:P5Hord5}, but they are far from the only corollaries. 
We will now show that $\numcrit{k}{P_5,H}<\infty$ for every $2P_2$-free bipartite graph $H$.
Recall that in \cite{Hoang2015} it was shown that $\numcrit{k}{2P_2}=\infty$-free graphs for all $k\ge 5$. 
Additionally, it is known that the half-graph $H_n$ is a universal graph for $2P_2$-free bipartite graphs~\cite{LozinRudol2027}. 
That is every $2P_2$-free bipartite graph is an induced subgraph of $H_n$ for some $n$. 
Thus, the following theorem follows immediately. 
We will provide a short proof for completeness.

\begin{theorem}\label{thm:P5Bipartite}
    Let $H$ be a bipartite graph and $k \geq 5$. Then $\numcrit{k}{P_5,H}<\infty$ if and only if $H$ is $2P_2$-free.
\end{theorem}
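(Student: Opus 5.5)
We prove the two directions separately; both reduce quickly to results already available.

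For the ``if'' direction, suppose $H$ is a $2P_2$-free bipartite graph. We invoke the universality result cited just before the statement: every $2P_2$-free bipartite graph is an induced subgraph of the half-graph $H_n$ for some $n$. For completeness we would sketch why. Take a bipartition $(A,B)$ of $H$. Being $2P_2$-free forces the neighbourhoods in $B$ of the vertices of $A$ to be totally ordered by inclusion. Indeed, if $a,a'\in A$ had incomparable neighbourhoods, we could pick $b\in N(a)\setminus N(a')$ and $b'\in N(a')\setminus N(a)$, and then $\{a,b,a',b'\}$ would induce a $2P_2$.

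Order $A$ as $a_1,\dots,a_p$ with $N(a_1)\subseteq\cdots\subseteq N(a_p)$. Each vertex of $B$ then sees exactly a final segment of this ordering (possibly empty), and these segments are nested. This lets us embed $H$ into $H_n$ with $n=|V(H)|$: send each $a_i$ to an $a$-vertex of $H_n$ and each $b\in B$ to an appropriate $b_j$, so that adjacency ``$a_i\sim b_j$ iff $i\ge j$'' reproduces $H$. Ties and isolated vertices are handled by using spare indices, for example by interleaving distinct indices, or by mapping vertices with empty neighbourhood to $b_j$ with $j$ larger than all used $a$-indices. This bookkeeping is the only mildly fiddly step.

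Once $H$ is an induced subgraph of $H_n$, every $(P_5,H)$-free graph is also $(P_5,H_n)$-free. Hence $\critset{k}{P_5,H}\subseteq\critset{k}{P_5,H_n}$, and this set is finite by Theorem~\ref{thm:P5Hn}.

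For the ``only if'' direction, suppose $H$ is bipartite but contains an induced $2P_2$. By~\cite{Hoang2015}, for every $k\ge5$ there are infinitely many $k$-vertex-critical $2P_2$-free graphs. Every $2P_2$-free graph is $P_5$-free, since $P_5$ contains an induced $2P_2$. It is also $H$-free, since $H$ contains $2P_2$. Thus all of these graphs lie in $\critset{k}{P_5,H}$, and so $\numcrit{k}{P_5,H}=\infty$.

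The only real obstacle is the embedding into $H_n$. Since the paper cites~\cite{LozinRudol2027} for this fact, we would rely on that citation and include the nested-neighbourhood argument above only as a brief justification.
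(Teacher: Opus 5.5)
Your proposal is correct and follows essentially the same route as the paper. The ``only if'' direction uses the infinite family of $k$-vertex-critical $2P_2$-free graphs from~\cite{Hoang2015}; these graphs are automatically $P_5$-free and $H$-free. The ``if'' direction embeds $H$ into a half-graph $H_n$ via the nested-neighbourhood ordering and then applies Theorem~\ref{thm:P5Hn}.

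The paper carries out your interleaving bookkeeping explicitly, and it is exactly the construction you describe. It groups $A$ into degree classes $A_1,\dots,A_d$ and places the block $B_i=N(A_i)\setminus N(A_{i-1})$ on the $b$-side immediately before $A_i$ on the $a$-side. It then puts the isolated vertices, all assumed to lie in $B$, at the highest indices.
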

\begin{proof}
From \cite{Hoang2015}, it suffices to show there are only finitely many $k$-vertex-critical $(P_5, H)$-free graphs where $H$ is bipartite and $2P_2$-free.
From Theorem \ref{thm:P5Hn}, there are finitely many $k$-vertex-critical $(P_5, H_n)$-free graphs for any fixed $n$.
So, it suffices to show for some large enough $n$ that $H$ is an induced subgraph of $H_n$.
Let $A=\{a_1, \ldots,a_m \}$ and $B=\{b_1, \ldots,b_r \}$ be a bipartition of $H$ where all isolated vertices are in $B$.
Without loss of generality relabel the vertices of $A$ in increasing order of degree such that $a_1$ has the minimum degree in $A$.
Since $H$ is $2P_2$-free then for any two $a_i, a_j \in A$ either $N(a_i) \subseteq N(a_j)$ or $N(a_j) \subseteq N(a_i)$.
Therefore 

$$N(a_1) \subseteq N(a_2) \subseteq \cdots \subseteq N(a_m),$$

\noindent with $N(a_1) = N(a_2)$ if and only if  $|N(a_1)| = |N(a_2)|$.

Now partition $A = A_1 \cup A_2 \cup \cdots \cup A_d$ by the degree of each vertex such that $A_1$ is all vertices of lowest degree in $A$ and $A_d$ is all vertices of highest degree in $A$.
Now let $B_1=N(A_1)$, $B_2=N(A_2)/N(A_1)$, $B_2=N(A_3)/N(A_2)$, and so on.
Additionally let $B_0$ be all vertices in $H$ which are isolated.
Note that for each $i\geq 1$, the vertices in $B_i$ are joined to every vertex in each $A_j$ with $j\geq i$.
Now consider the half-graph $H_n$ with $n = m+r$ as labelled in Figure \ref{subfig:halfgraph5labelled}.
To avoid confusion we will use $a_i'$ and $b_i'$ when referring to vertices in $H_n$. We will now choose a set of vertices $S$ which induce $H$ in $H_n$.
For each of the $n$ pairs $\{a_i', b_i'\}$ in $H_n$ we will choose exactly one of $a_i$ or $b_i$ to be in $S$.
Working from the lowest index to the highest index in $H_n$ alternate choosing $|B_1|$ vertices form the $B'$ side to be in $S$, then the next $|A_1|$ vertices form the $A'$ side, then the next $|B_2|$ vertices form the $B'$ side and so on. At each iteration, label the set of $|A_i|$ or $|B_i|$ chosen vertices to be $A_i'$ or $B_i'$.
This should continue until we have chosen $|A_d|$ vertices form the $A'$ side to be in $S$.
At this point there are exactly $|B_0|$ vertices remaining which we choose from the $B'$ side to be in $S$.
Now consider the subgraph of $H_n$ induced by $S$.
Now for each $i\geq 1$, the vertices in $B_i'$ are joined to every vertex in each $A_j'$ with $j\geq i$.
Additionally the vertices in $B_0$ are isolated.
Therefore $H$ is an induced graph of $H_n$.

\end{proof}

\begin{theorem}\label{thm:P4ellP1Cocktail}
    For all $\ell \ge 0$ and $k,n\ge 1$, $\numcrit{k}{P_4+\ell P_1,\overline{L(K_{2,n})}}<\infty$.
\end{theorem}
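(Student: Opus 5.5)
The plan is to mirror the proof of Theorem~\ref{thm:P5Hn}. By Lemma~\ref{lem:finitekcolorableprimeimpliesfinitekcritical}, applied to the hereditary class of $(P_4+\ell P_1,\overline{L(K_{2,n})})$-free graphs, it suffices to show that there are only finitely many $k$-colourable prime $(P_4+\ell P_1,\overline{L(K_{2,n})})$-free graphs. I will get this from Corollary~\ref{cor:unavoidableprimegraphs} by choosing its parameters so that each of the five forbidden graphs there is automatically excluded.

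First I choose the parameters. Take $m\ge \max\{5,\,2\ell+4\}$ and $r\ge k+1$. Then set $N=2mr$, and enlarge $m$ if necessary so that $N\ge \max\{n,\ \ell+2,\ 3\}$. Corollary~\ref{cor:unavoidableprimegraphs}, applied with these values in place of $n$ and $k$, says there are only finitely many prime graphs containing none of $K_{1,N}^{(1)}$, $\overline{L(K_{2,N})}$, $H_N$, $P_m$, $K_r$ as an induced subgraph. So it remains to check that every $k$-colourable $(P_4+\ell P_1,\overline{L(K_{2,n})})$-free graph avoids all five.

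Next I check the five graphs in turn.
\begin{itemize}
\item $K_r$: it is not $k$-colourable since $r>k$.
\item $K_{1,N}^{(1)}$ and $H_N$: by Table~\ref{tab:subgraphs}, each contains an induced $P_4+(N-2)P_1$. This contains $P_4+\ell P_1$ because $N-2\ge \ell$.
\item $P_m$: it contains $P_4+\ell P_1$. Take the first four vertices as the $P_4$, skip one vertex, and then take every other vertex; this uses $4+2\ell\le m$ vertices.
\item $\overline{L(K_{2,N})}$: it contains $\overline{L(K_{2,n})}$ as an induced subgraph since $N\ge n$. Indeed, $L(K_{2,n})$ is induced in $L(K_{2,N})$ (restrict to $n$ of the $N$ pairs $a_i,b_i$), and complementation preserves induced subgraphs.
\end{itemize}
Hence every $k$-colourable prime graph in the class has bounded order, and the theorem follows.

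I expect the main care point to be bookkeeping rather than any real obstacle. Corollary~\ref{cor:unavoidableprimegraphs} requires its path parameter to be at least $3$ and ties all of its parameters to a single number $2mr$. This is handled by taking $m$ large, since all the containments above are monotone in the parameter. Small cases such as $n\le 2$ or $\ell=0$ are also covered, because forbidding a smaller graph only shrinks the class.
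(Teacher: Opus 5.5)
Your proposal is correct and follows the paper's proof essentially verbatim: you reduce via Lemma~\ref{lem:finitekcolorableprimeimpliesfinitekcritical} to $k$-colourable prime graphs, then apply Corollary~\ref{cor:unavoidableprimegraphs} with $m\ge 2\ell+4$, $r\ge k+1$ and $2mr\ge n$. As in the paper, you use Table~\ref{tab:subgraphs} to find $P_4+\ell P_1$ inside $K_{1,N}^{(1)}$, $H_N$ and $P_m$, and $\overline{L(K_{2,n})}$ inside $\overline{L(K_{2,N})}$. Your write-up is slightly cleaner than the paper's, since you use $N=2mr$ consistently (the paper writes $2mk$ in places) and you explicitly justify why $P_m$ contains $P_4+\ell P_1$.
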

\begin{proof}
Consider the collection of all $(P_4+\ell P_1, \overline{L(K_{2,n})})$-free $k$-critical graphs for some fixed $k,n \geq 1$.
From Lemma~\ref{lem:finitekcolorableprimeimpliesfinitekcritical}, it suffices to show there are only finitely many $k$-colourable $(P_4+\ell P_1, \overline{L(K_{2,n})})$-free prime graphs.
From Corollary \ref{cor:unavoidableprimegraphs}, there are only finitely many prime graphs which forbid $K_{1,2mr}^{(1)}$, $\overline{L(K_{2,2mr})}$, $H_{2mr}$, $P_{m}$, and $K_r$ as an induced subgraph.
Choose $m \geq 2\ell+4$ and $r\geq k+1$ such that $2mr \geq n$.
From Table \ref{tab:subgraphs}, we have that $K_{1,2mk}^{(1)}$, $H_{2mk}$, and $P_{m}$ each contain an induced $P_4+\ell P_1$.
Moreover, $\overline{L(K_{2,2mk})}$ contains an induced $\overline{L(K_{2,n})}$ and no $k$-colourable graph contains an induced $K_r$.
Therefore there are only finitely many $k$-colourable $(P_4+\ell P_1, \overline{L(K_{2,n})})$-free prime graphs.   
\end{proof}

\section{($P_5$, $H$)-free graphs for non-bipartite $H$}\label{sec:P5HnonbipartiteH}

In this section, we make some progress on Problem~\ref{prob:P5Hlargerchrom}.
We begin by showing one large class of graphs $H$ where we have finiteness.
The rest of the section is devoted to finding new structure in previously known infinite families to prove that there are infinitely many $k$-vertex-critical $(P_5,H)$-free graphs for new graphs $H$.

\begin{theorem}\label{thm:P5co-(Kell+P3)-free}
    For all $k, s\geq 1$ and $t \geq 2$, $\numcrit{k}{P_5, K_{s,t}+e}<\infty$ where $K_{s,t}+e$ is a complete bipartite graph with an edge added between two of the $t$ vertices in one set of the bipartition.
\end{theorem}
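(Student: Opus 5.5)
By Lemma~\ref{lem:finitekcolorableprimeimpliesfinitekcritical} it suffices to bound the order of every $k$-colourable prime $(P_5,K_{s,t}+e)$-free graph. I would run the argument of Theorem~\ref{thm:P5Hn}. By Corollary~\ref{cor:unavoidableprimegraphs} with $m=5$ and $r=k+1$, every sufficiently large such prime graph contains an induced $H_N$ for arbitrarily large $N$. The reason is that $K_{1,N}^{(1)}$, $\overline{L(K_{2,N})}$ and $P_m$ all contain $P_5$ (Table~\ref{tab:subgraphs}), and $K_{k+1}$ is excluded by $k$-colourability. So the whole proof reduces to one claim. \emph{Claim:} for $N$ large in terms of $k,s,t$, a $k$-vertex-critical $P_5$-free graph containing an induced $H_N$ contains an induced $K_{s,t}+e$. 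Given the Claim, the conclusion follows directly from Theorem~\ref{thm:P5Hn}: a $(P_5,K_{s,t}+e)$-free $k$-vertex-critical graph is then $(P_5,H_N)$-free.

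To prove the Claim, fix an induced $H_N$ with sides $a_1,\dots,a_N$ and $b_1,\dots,b_N$, so that $a_i\sim b_j$ iff $i\ge j$. Take $S=\{b_1,\dots,b_s\}$; it is stable and is complete to every $a_i$ with $i\ge s$. The set $T_0=\{a_s,\dots,a_N\}$ is stable as well. I would look for a vertex $w\notin S\cup T_0$ with three properties: $w$ is complete to $S$, $w$ has exactly one neighbour $a_p$ in $T_0$, and $w$ has at least $t-2$ non-neighbours in $T_0$. Then $S$ together with $\{w,a_p\}$ and $t-2$ non-neighbours of $w$ from $T_0$ induces $K_{s,t}+e$, with the extra edge $wa_p$. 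The existence of $w$ should come from criticality via Lemma~\ref{lem:XY}. Take $X=\{a_i\}$ and $Y=\{a_j\}$ for $i<j$ in the stable set $T_0$. Since $N(a_i)\cap V(H_N)\subseteq N(a_j)$, Lemma~\ref{lem:XY} forces a vertex $w\in N(a_i)\setminus N(a_j)$ outside $H_N$. Equivalently, no vertex of a critical graph is dominated by a non-adjacent vertex.

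Next I would use $P_5$-freeness to pin down the adjacencies of such a $w$. If $w\nsim b_\ell$ for some $\ell\le s$, look at the path $w - a_i - b_\ell - a_j - b_{j'}$, where $b_{j'}$ is chosen with $i<j'\le j$, so that $b_{j'}\sim a_j$ and $b_{j'}\nsim a_i$. This path is an induced $P_5$ unless $w\sim b_{j'}$. That gives a dichotomy: either $w$ is complete to $S$, or $w$ has many neighbours among the $b$'s. I would bound the second option using Ramsey's theorem together with the absence of $K_{k+1}$, after passing to a large homogeneous subsequence of the indices. For the requirement that $w$ have exactly one neighbour in a $t$-subset of $T_0$, I would again apply Ramsey to the adjacency pattern of $w$ on $T_0$ and combine it with further induced-$P_5$ checks on paths of the form $w - a_i - b - a_j - \cdots$. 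Sets where $w$ sees none or all of $T_0$ are replaced by a sub-half-graph where it sees exactly one.

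The main obstacle is this last step: the witness $w$ from Lemma~\ref{lem:XY} must be complete to $S$ while splitting the $a$'s with exactly one neighbour among the chosen $t$ vertices. My first attempt would choose $i<j$ as consecutive indices deep inside a long homogeneous block and take $t-2$ more $a$'s from the same block. If $w$ has many $a$-neighbours inside the block, then two of them, $a_i$ and $a_{i'}$, together with $w$ and a common $b$ give a $K_{1,2}+e$ pattern. Iterating this toward $K_{s,t}+e$, or else extracting a clique $K_{k+1}$, is the fallback if the direct choice fails.
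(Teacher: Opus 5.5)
Your reduction to the Claim is valid. Your setup matches the paper's up to the side-reversing symmetry $a_i\leftrightarrow b_{N+1-i}$ of $H_N$. The paper also takes a witness from Lemma~\ref{lem:XY} for a dominated pair on one side of the half-graph. It then tries to assemble $K_{s,t}+e$ from $s$ vertices at one end of the other side, the witness, one of its neighbours, and $t-2$ of its non-neighbours. But the Claim is the entire content of the theorem, and your proposal does not prove it.

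The first missing piece is that $w$ sees $S$. Your $P_5$ check only gives the dichotomy ``complete to $S$ or many $b$-neighbours''. The second branch is deferred to a Ramsey argument that is never specified, and Ramsey plus $K_{k+1}$-freeness cannot act on how one vertex meets the stable set of $b$'s. The paper closes this with three more $P_5$ checks. In your labels, suppose $w\nsim b_x$ and $w\nsim b_y$ with $y<x\le i$. Then $w\sim b_j$, since otherwise $\{b_j,a_j,b_x,a_i,w\}$ induces $P_5$. Next $w\nsim a_y$, since otherwise $\{a_j,b_x,a_i,w,a_y\}$ induces $P_5$. Finally $\{b_j,w,a_i,b_y,a_y\}$ induces $P_5$. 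So $w$ misses at most one of $b_1,\dots,b_i$. Taking $i\ge s+1$, $w$ sees $s$ of $b_1,\dots,b_{s+1}$, which suffices.

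The serious gap is the last step, which you flag yourself. Suppose $w$ sees $s$ vertices of $b_1,\dots,b_{s+1}$ (all complete to $a_{s+1},\dots,a_N$) and $w\sim a_i$. Then $(K_{s,t}+e)$-freeness forces $w$ to have at most $t-3$ non-neighbours among $a_{s+1},\dots,a_N$. So in a counterexample every witness sees almost all of that side. No choice of subset, homogeneous subsequence, or sub-half-graph can create non-neighbours that a fixed $w$ does not have. Your plan to find a single good $w$ therefore cannot succeed for $t\ge 4$; for $t\le 3$, $a_i$ (and $a_j$) already finish.

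The missing idea is to play many witnesses against each other. In the paper's (mirrored) labels, each witness $v_x$ has $v_x\nsim b_x$ and at most $t-3$ non-neighbours among the $b$'s. One greedily selects $R(k,t)$ witnesses, discarding at each step the few later $b$'s the current witness misses. This makes every selected witness adjacent to the $b$'s of all later selected indices. Since $G\ne K_k$, Ramsey gives $t$ pairwise non-adjacent selected witnesses $v_1,\dots,v_t$ with associated $b_1,\dots,b_t$. Because $v_{t-1}$ misses $b_{t-1}$ and at most $t-4$ others, it sees some $b_x$ with $x\le t-2$. Then $b_x,v_{t-1},b_t,v_x,b_{t-1}$ is an induced $P_5$. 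This contradiction between two non-adjacent witnesses, rather than the adjacency pattern of a single one, is what your proposal lacks.
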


\begin{proof}
    
    Let $G$ be a $k$-vertex-critical $(P_5, K_{s,t}+e)$-free graph.
    If $G$ is $H_n$-free for any fixed $n \geq 1$, then we are done from Theorem~\ref{thm:P5Hn}.
    So suppose $G$ contains an induced $H_n$ for some $n \geq s+(t-4)R(k,t)+1$ with the same labeling established in Figure~\ref{subfig:halfgraph5labelled}.
    Let $v_i \not\in V(H_n)$ be some vertex such that for two vertices $b_i, b_j \in V(H_n)$ with $i<j \leq n-s-1$ we have $v_i \not\sim b_i$ and $v_i \sim b_{j}$.
    Note that for each pair of vertices $b_i$ and $b_j$ with $i<j$, such a vertex $v_i$ exists from Lemma~\ref{lem:XY} as otherwise we would have $N(b_j) \subseteq N(b_i)$ for any $j>i$.
    We will now show a general claim for any given $v_i$.
    We claim that $v_i$ is adjacent to all but at most $t-3$ vertices in $\{b_1, \ldots, b_{n-s-1}\}$.

    First suppose for any two indices $j\leq x < y$ that $v_i \not\sim a_x$ and $v_i \not\sim a_y$.
    We will show that this implies that $G$ contains an induced $P_5$.
    Thus $v_i \sim a_i$ otherwise $\{a_i, b_i, a_x, b_{j}, v_i\}$ would induce a $P_5$.
    As $x <y$ then $a_x \not\sim b_y$.
    Moreover, $v_i \not\sim b_{y}$ otherwise $\{b_i, a_x, b_{j}, v_i, b_y\}$ would induce a $P_5$.
    However now $\{a_i, v_i, b_j, a_y, b_y\}$ induces a $P_5$.
    Therefore each $v_i$ is adjacent to all but at most one vertex in $\{a_j, a_{j+1}, \ldots, a_n\}$.
    Hence $v_i$ is adjacent to at least $s$ vertices in $\{a_{n-s},a_{n-s+1}, \dots, a_{n}\}$.
    Let $A_s$ denote the set of those $s$ neighbours.
    Now suppose $v_i$ is not adjacent to $t-2$ vertices in $\{b_1, \ldots, b_{n-s-1}\}$.
    Let $B_{t-2}$ denote the set of those $t-2$ neighbours.
    Note that $B_{t-2} \cup \{v_i, b_j\}$ induces a graph with $t$ vertices and exactly one edge $\{v_i, b_j\}$.
    Additionally $B_{t-2} \cup \{v_i, b_j\}$ is joined to $A_s$.
    Thus $A_s \cup B_{t-2} \cup \{v_i, b_j\}$ induces a $K_{s,t}+e$.
    Therefore $v_i$ is adjacent to all but at most $t-3$ vertices in $\{b_1, \ldots, b_{n-s-1}\}$.
    Note when $t=2$ or $t=3$ this is a contradiction as $v_i \not\sim b_i$ and hence is adjacent to all but at least one vertex in $\{b_1, \ldots, b_{n-s-1}\}$.

    Now recall that for each $i\in\{1,2,\dots,n-s-1\}$, there exists a vertex $v_i$ for each $b_i$ such that $v_i \not\sim b_i $ but $v_i \sim b_j$.
    Note each $v_i$ are not necessarily distinct.
    We will form the sets $V'$ and $B'$ as follows.
    Initialize $V_1'=\{v_{i_1}\}$ for $i_1=1$ and $B_1'=\{b_1, \ldots, b_{n-s-1}\}$.
    For each $m\ge 2$, let 
    $$B_{m}'=B_{m-1}'\setminus \{b_j\in B_{m-1}': v_{i_{m-1}}\nsim b_j \text{ and } j > i_{m-1}\}.$$
    
    Next, choose the smallest index $i_{m}$ such that $v_{i_{m-1}}\sim b_{i_m}$ and let $V_m'=V_{m-1}'\cup \{v_{i_m}\}$.
    Note that $b_{i_m} \in B_m'$.
    Repeat this process iteratively until $m=R(k,t)$.
    Note, we can do this since $n-s-1\ge (t-4)R(k,t)$ and each $v_{i_j}$ has at most $t-3$ non-neighbours in $B_{j-1}'$, including $b_{i_j}$, which remains in the set $B_{j}'$.
    As $m = R(k,t)$, we may also assume that there is subset $V'' \subseteq V_m'$ with $|V''| =t$ which is a stable set otherwise $\chi(G) \geq k$.
    So take $V''$ and $B''=\{b_i\in B_m': v_i\in V''\}$.
    Note that for each $v_i \in V''$, by the construction of $B''$, we have that $v_i \not\sim b_i$ and $v_i \sim b_j$ for all $b_j \in B''$ with $j\geq i$.
    Now relabel the $V''$ and $B''$ as $\{v_1, \ldots , v_{t}\}$ and $\{b_1, \ldots , b_{t}\}$ which preserves the order of the original indices.
    Note that $v_{t-1}$ still has at most $t-3$ non-neighbours in $B''$.
    Thus for some $x \in \{1, \ldots, t-2 \}$ we have $v_{t-1} \sim b_x$.
    Moreover we have $v_x \sim b_{t-1}$, $v_x \sim b_{t}$, and $v_{t-1} \sim b_{t}$.
    Now, $\{b_x, v_{t-1}, b_t, v_x, b_{t-1}\}$ induces a $P_5$, a contradiction.
    
\end{proof}


    

We now move to finding new structure in a previous infinite family of $k$-vertex-critical $(P_5, C_5)$-free construction.

\begin{definition}[\cite{CameronHoang2023P5C5}]\label{def:G(q,k)}
Fix $q\ge 1$ and $k\ge 3$. Let $\infcrit{q}{k}$ be a graph on vertex set $\{v_0,v_1,...,v_{kq}\}$ where, with each integer taken modulo $kq+1$, the neighbourhood of vertex $v_i$ is $$\{v_{i-1},v_{i+1}\}\cup\{v_{i+kj+m} : m=2,3,...k-1\text{ and } j=0,1,...,q-1\}.$$ 
\end{definition}

See Figure~\ref{fig:Gqkneighbandfullgraph} for a depiction of the neighbourhood of $v_0$ in $\infcrit{6}{6}$ and for the full graph $\infcrit{3}{7}$. Throughout the proofs that follow it is helpful to note that it follows directly from Definition~\ref{def:G(q,k)} that the non-neighbours of every vertex $v_{i}$ in $V(\infcrit{q}{k})$ are every vertex with label $i$ or $i+1\pmod{k}$ in the set $\{v_{i+k},v_{i+k+1},v_{i+k+2},\dots ,v_{qk}\}$ and every vertex with label $i-1$ or $i\pmod{k}$ in the set $\{v_0,v_1,\dots ,v_{i-k}\}$.

\setcounter{subfigure}{0}
\begin{center}
\begin{figure}[h!]
\def\c{3}
\centering
\qquad
\subfigure[]{\label{subfig:N(v0)}
\begin{tikzpicture}[scale=\c]
        \pgfmathsetmacro\k{6}
        \pgfmathsetmacro\q{6}
        
        \pgfmathsetmacro\n{int((\k)*\q+1)}
        \pgfmathsetmacro\nminusone{int(\n-1)}
        \pgfmathsetmacro\kminustwo{int(\k-2)}
        \pgfmathsetmacro\kminusone{int(\k-1)}
        \pgfmathsetmacro\qminusone{int(\q-1)}
        \GraphInit[vstyle=Classic]
    
        \foreach \i in {0,...,\nminusone} {
            \node[circle, fill=black, inner sep=2.5pt] (v\i) at ({360/\n * \i+90}:1) {};
            \node at ({360/\n * \i+90}:1.1) {\tiny $v_{\i}$};
        }
    
        \foreach \i in {0} {
    
            \foreach \j in {1,...,\kminusone} {
                \pgfmathtruncatemacro{\to}{Mod(\i+\j,\n)}
                \draw (v\i) -- (v\to);
            }
    
            \foreach \j in {1,...,\kminusone} {
                \pgfmathtruncatemacro{\to}{Mod(\i-\j,\n)}
                \draw (v\i) -- (v\to);
            }
    
            \foreach \j in {0,...,\qminusone} {
                \foreach \m in {2,...,\kminusone} {
                    \pgfmathtruncatemacro{\to}{Mod(\i + (\k)*\j + \m,\n)}
                    \draw (v\i) -- (v\to);
                }
            }
        }
    \end{tikzpicture}
}
\qquad
\subfigure[]{\label{subfig:G(3,7)}
\begin{tikzpicture}[scale=\c]
        \pgfmathsetmacro\k{7}
        \pgfmathsetmacro\q{3}
        
        \pgfmathsetmacro\n{int((\k)*\q+1)}
        \pgfmathsetmacro\nminusone{int(\n-1)}
        \pgfmathsetmacro\kminustwo{int(\k-2)}
        \pgfmathsetmacro\kminusone{int(\k-1)}
        \pgfmathsetmacro\qminusone{int(\q-1)}
        \GraphInit[vstyle=Classic]
    
        \foreach \i in {0,...,\nminusone} {
            \node[circle, fill=black, inner sep=2.5pt] (v\i) at ({360/\n * \i+90}:1) {};
            \node at ({360/\n * \i+90}:1.1) {\tiny $v_{\i}$};
        }
    
        \foreach \i in {0,...,\nminusone} {
    
            \foreach \j in {1,...,\kminusone} {
                \pgfmathtruncatemacro{\to}{Mod(\i+\j,\n)}
                \draw (v\i) -- (v\to);
            }
    
            \foreach \j in {1,...,\kminusone} {
                \pgfmathtruncatemacro{\to}{Mod(\i-\j,\n)}
                \draw (v\i) -- (v\to);
            }
    
            \foreach \j in {0,...,\qminusone} {
                \foreach \m in {2,...,\kminusone} {
                    \pgfmathtruncatemacro{\to}{Mod(\i + (\k)*\j + \m,\n)}
                    \draw (v\i) -- (v\to);
                }
            }
        }
    \end{tikzpicture}
}
\caption{The neighbourhood of $v_0$ in $G(6,6)$ (Figure~\ref{subfig:N(v0)}) and the graph $G(3,7)$ (Figure~\ref{subfig:G(3,7)}}\label{fig:Gqkneighbandfullgraph}.
\end{figure}
\end{center}

\begin{lemma}[\cite{CameronHoang2023P5C5}]
\label{lem:crtical}
    For all $k \geq 3$ and $q \geq 1$, $G(q,k)$ is $(k+1)$-vertex-critical.
\end{lemma}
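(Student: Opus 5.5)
The plan is to use the circulant structure of $\infcrit{q}{k}$. We show directly that $\chi(\infcrit{q}{k})\ge k+1$ via an independence-number bound, and that deleting any single vertex leaves a $k$-colourable graph. These two facts give the statement, since every proper induced subgraph of $\infcrit{q}{k}$ is contained in $\infcrit{q}{k}-v$ for some vertex $v$. Throughout, write $n=kq+1$ and take indices modulo $n$.

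\textbf{Step 1: $\infcrit{q}{k}$ is a well-defined circulant graph.} First I check that the connection set $S=\{\pm1\}\cup\{kj+m: 2\le m\le k-1,\ 0\le j\le q-1\}$ is closed under negation modulo $n$, so that adjacency is symmetric. Indeed, $-(kj+m)\equiv k(q-1-j)+(k+1-m)$, and $k+1-m$ again ranges over $\{2,\dots,k-1\}$. Consequently the map $v_i\mapsto v_{i+1}$ is an automorphism, and it suffices to treat the deletion of $v_0$.

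Next I identify the non-neighbour offsets. The set $S$ covers the offsets $1$, $n-1=kq$, and every residue $r\in\{2,\dots,kq-1\}$ with $r\not\equiv 0,1\pmod k$. Hence $v_i\nsim v_{i+d}$ for $d\neq 0$ exactly when $d\in\{kj,kj+1: 1\le j\le q-1\}$. This agrees with the remark after Definition~\ref{def:G(q,k)}.

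\textbf{Step 2: $\alpha(\infcrit{q}{k})\le q$, hence $\chi\ge k+1$.} Let $I=\{v_{i_1},\dots,v_{i_r}\}$ be a stable set with $0\le i_1<\dots<i_r\le kq$ and $r\ge 2$. The $r$ cyclic gaps $i_2-i_1,\dots,i_r-i_{r-1}$ and $n-i_r+i_1$ are all differences between vertices of $I$. By Step 1, each gap therefore lies in $\{kj,kj+1\}$ for some $j\ge1$, so each gap is at least $k$. The gaps sum to $n=kq+1$, so $rk\le kq+1$ and hence $r\le q$. It follows that $\chi(\infcrit{q}{k})\ge n/q=k+1/q>k$, so $\chi(\infcrit{q}{k})\ge k+1$.

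\textbf{Step 3: $\infcrit{q}{k}-v_0$ is $k$-colourable.} For $c=1,\dots,k$, let $C_c=\{v_{c+kj}: 0\le j\le q-1\}$. These $k$ sets partition $\{v_1,\dots,v_{kq}\}$. Two vertices in $C_c$ differ by $kj$ with $1\le j\le q-1$, which is a non-neighbour offset by Step 1, so each $C_c$ is stable. This gives a proper $k$-colouring of $\infcrit{q}{k}-v_0$.

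By vertex-transitivity, $\infcrit{q}{k}-v$ is $k$-colourable for every vertex $v$. In particular $\chi(\infcrit{q}{k})\le k+1$, so equality holds, and $\infcrit{q}{k}$ is $(k+1)$-vertex-critical.

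The only delicate point is the bookkeeping in Step 1: confirming that the listed offsets really are symmetric modulo $kq+1$, and that the non-neighbour offsets are exactly $kj$ and $kj+1$ for $1\le j\le q-1$. This is where I expect to spend the most care. Steps 2 and 3 are then short counting arguments.
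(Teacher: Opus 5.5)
Your proof is correct and complete. This paper does not prove the lemma; it imports it from \cite{CameronHoang2023P5C5}, so there is no in-paper argument to compare against.

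The one ingredient the paper does record from that source is Lemma~\ref{lem:stable}, and your Step~3 is exactly the colouring that lemma encodes. Your classes $C_1,\dots,C_{k-1}$ are the sets $V_1,\dots,V_{k-1}$. Your class $C_k=V_0\setminus\{v_0\}$ is stable because $v_0v_{qk}$ is the only edge inside $V_0$.

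Beyond that, you supply two things the paper takes on trust:
\begin{itemize}
\item The check that the offset set is closed under negation modulo $kq+1$. This is what makes the definition, which is given only as neighbourhood lists, symmetric. It also makes $v_i\mapsto v_{i+1}$ an automorphism, which is the symmetry the paper later invokes without comment, e.g.\ in the proof of Theorem~\ref{thm:net-free}.
\item A self-contained lower bound. Your cyclic-gap count shows every stable set has at most $q$ vertices, so $\chi\ge (kq+1)/q>k$.
\end{itemize}

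Your argument also handles the degenerate case $q=1$ correctly. There the set of non-neighbour offsets is empty and $G(1,k)=K_{k+1}$.
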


For a given $q$ and $k$ and for $0\le i\le k-1$, let $V_i=\{v_t:t=\ i\pmod{k}\}$. It is clear that the $V_i$'s partition the vertex set of $\infcrit{q}{k}$. 

\begin{lemma}[\cite{CameronHoang2023P5C5}]
\label{lem:stable}
For $1\le i\le k$, $V_i$ is a stable set of $\infcrit{q}{k}$ and the only edge in $V_0$ is $v_0v_{qk}$.
\end{lemma}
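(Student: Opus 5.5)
The plan is a direct check from Definition~\ref{def:G(q,k)}: take two distinct vertices in the same class $V_i$ and test every way the definition could make them adjacent. Write $n=kq+1$ and take $v_a,v_b\in V_i$ with $0\le a<b\le kq$. Since $a\equiv b\pmod k$, we have $b-a=kr$ for some $1\le r\le q$. (In the statement the range $1\le i\le k$ should be read as $1\le i\le k-1$, since indices of the classes are taken modulo $k$ and $V_0$ is handled separately.)

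Adjacency is symmetric, so $v_a\sim v_b$ exactly when $v_b\in N(v_a)$. By the definition, this means the difference $b-a\pmod n$ is one of:
\begin{itemize}
\item $1$ or $n-1$, or
\item $kj+m$ with $j\in\{0,\dots,q-1\}$ and $m\in\{2,\dots,k-1\}$, or
\item $n-(kj+m)$ for such $j$ and $m$ (this covers $v_a\in N(v_b)$).
\end{itemize}
Since $0<kr<n$, we have $b-a\pmod n=kr$. So the test is whether $kr$ belongs to $\{1,n-1\}\cup\{kj+m\}\cup\{n-kj-m\}$.

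The key step is a residue computation modulo $k$. Every number $kj+m$ in the allowed range lies in $[2,kq-1]$ and has residue $m\in\{2,\dots,k-1\}$ modulo $k$, so it is never congruent to $0$. Hence $kr\neq kj+m$.

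Next, $n-(kj+m)=k(q-j)+1-m$ has residue $1-m\pmod k$, which lies in $\{2,\dots,k-1\}$ after reduction. Again this is nonzero, so $kr\neq n-kj-m$.

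It remains to check the consecutive case. Here $kr=1$ is impossible because $k\ge3$. The equation $kr=n-1=kq$ holds only when $r=q$, which forces $a=0$ and $b=kq$. Both endpoints then lie in $V_0$.

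Therefore each $V_i$ with $i\neq0$ is stable, and the unique edge inside $V_0$ is $v_0v_{qk}$, which is indeed an edge because $v_{qk}=v_{0-1}$. The only place that needs care is the bookkeeping of the wrap-around modulo $n=kq+1$. The residue argument above handles it once one notes that all the relevant differences already lie in $(0,n)$, so no further reduction modulo $n$ occurs.
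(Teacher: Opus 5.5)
Your proof is correct. The paper does not reprove this lemma; it cites \cite{CameronHoang2023P5C5}. Your argument is the natural direct check from Definition~\ref{def:G(q,k)}: you compare $b-a=kr\in(0,n)$ against the representatives $1$, $n-1$, $kj+m$ and $n-(kj+m)$, and use that the last two have nonzero residue modulo $k$. Your reading of the range as $1\le i\le k-1$ is also right, since $V_k$ would coincide with $V_0$, which is not stable.
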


\begin{lemma} \label{lem:cycleincomplement}
    Let $C = \{v_{0}, v_{i_1}, \ldots, v_{i_r}\}$ induce a cycle of length at least 4 in $\overline{G(n,k)}$. Then each remainder modulo $k$ must appear in the indices $ i_1, \ldots, i_{r}$.
\end{lemma}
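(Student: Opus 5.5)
The approach is to suppose some residue $c \pmod k$ is missing from the indices of $C$ and derive a chord in $C$. The missing residue lets us "unwrap" residues to integers, and then $\overline{\infcrit{q}{k}}$ restricted to $C$ becomes a layered graph whose consecutive layers are joined by nested (half-graph) adjacencies. Such a graph has no induced cycles of length at least $4$ (apart from one exceptional configuration coming from the edge $v_0v_{qk}$), which gives the contradiction.

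\textbf{Step 1: adjacency in the complement.} By the remark after Definition~\ref{def:G(q,k)}, $v_a$ and $v_b$ with $a<b$ are adjacent in $\overline{\infcrit{q}{k}}$ exactly when $b\ge a+k$ and $b\equiv a$ or $a+1 \pmod k$. So along every complement edge, going from the smaller index to the larger one, the residue stays the same or increases by $1$ modulo $k$. By Lemma~\ref{lem:stable}, each $V_i$ induces a clique in the complement, except that $v_0\nsim v_{qk}$ there.

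\textbf{Step 2: lifting the residues.} Suppose residue $c$ does not occur among the indices of $C$. Since each edge changes residue by $0$ or $\pm1$ and never passes through $c$, we may lift the residues of the vertices of $C$ consistently to integers $\rho$ in the interval $[c+1,c+k-1]$. Write each index as $t=\rho+ks$ for an integer $s$ (the shift is constant, so it can be absorbed). Then Step 1 becomes the following description.
\begin{itemize}
\item Two vertices in the same layer $\rho$ are always adjacent, except for the pair $v_0,v_{qk}$.
\item A vertex $(\rho,s)$ and a vertex $(\rho+1,s')$ are adjacent if and only if $s'>s$ (one should double-check the exact inequality near the lifted wrap-around, but it is a threshold condition in any case).
\item Vertices in layers differing by at least $2$ are never adjacent.
\end{itemize}
In particular, the bipartite graph between consecutive layers is a half-graph, so the neighbourhoods of the vertices of one layer in the next layer are nested, and it contains no induced $2P_2$.

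\textbf{Step 3: finding a chord.} Let $M$ be the largest layer value occurring in $C$. Since $C$ is an induced cycle with at least $4$ vertices and layers are (almost) cliques, layer $M$ contains one or two vertices of $C$, and these are consecutive on $C$.
\begin{itemize}
\item If layer $M$ contains a single vertex $x$, both cycle-neighbours of $x$ lie in layer $M-1$. They are then adjacent, giving a chord, unless they are exactly the pair $v_0,v_{qk}$.
\item If layer $M$ contains two vertices $x,x'$, their other cycle-neighbours $y,y'$ lie in layer $M-1$. Then $y\sim y'$ gives a chord unless $|C|=4$. In the case $|C|=4$, the pairs $xy$ and $x'y'$ form an induced $2P_2$ in the half-graph between layers $M-1$ and $M$, which is impossible.
\end{itemize}

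\textbf{Main obstacle.} The delicate part is the exceptional non-edge $v_0v_{qk}$, which matters precisely because $v_0\in C$ by hypothesis. I would handle it by noting that $v_{qk}$ has no later vertices, so all of its complement-neighbours are earlier vertices of residue $k-1$ or $0$. Checking directly that a configuration $x\sim v_0$, $x\sim v_{qk}$ with $x$ in the top layer forces the remaining vertices of $C$ to create a chord or a residue contradiction should close this case. The same symmetric argument applied to the lowest layer can be used as a backup if the top-layer analysis runs into the exception.
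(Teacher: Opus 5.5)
Your layering route differs from the paper's and is sound in outline. The paper starts at $v_0$, puts its two cycle-neighbours in $V_1$ and $V_0$, and shows by induction that walking around $C$ from the $V_1$-neighbour the residue rises by $0$ or $1$ at each step, so every residue is passed before returning to $V_0$. Your Steps 1 and 2 are correct, including the wrap-around: with $t=\rho+ks$, a vertex $(k-1,s)$ is adjacent to $(k,s')$ exactly when $s'>s$.

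\textbf{The gap.} You never actually handle the exceptional non-edge $v_0v_{qk}$, and it breaks Step 3 in a place you did not flag.
\begin{itemize}
\item The claim that the top layer $L_M$ meets $C$ in one or two consecutive vertices needs $L_M$ to be a clique. That fails when $L_M$ is the residue-$0$ class, which is exactly the case where the missing residue is $c=1$. There, a priori, $C\cap L_M$ could be $\{v_0,v_{qk}\}$ (not consecutive on $C$) or $\{v_0,v_{jk},v_{qk}\}$.
\item Your ``main obstacle'' paragraph addresses only the other configuration ($x\sim v_0$, $x\sim v_{qk}$), and it ends with ``should close this case'' rather than an argument.
\item In the two-vertex case, the sentence ``$y\sim y'$ gives a chord'' is also false if $\{y,y'\}=\{v_0,v_{qk}\}$.
\end{itemize}

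\textbf{The missing ingredient} is the complement-neighbourhood of $v_0$, namely $(V_0\setminus\{v_0,v_{qk}\})\cup(V_1\setminus\{v_1\})$. This is a union of two cliques, so the two non-adjacent $C$-neighbours of $v_0$ lie one in $V_0$ and one in $V_1$; this is the paper's opening move. Hence $c\ge 2$, residue $1$ is lifted to layer $k+1$, and the top layer satisfies $M\ge k+1$, so it is a genuine clique. The remaining cases then close as follows.
\begin{itemize}
\item Two vertices of $C$ in $L_M$: drop the chord argument and use your nestedness ($2P_2$) argument for every $|C|\ge 4$. It only needs $x\nsim y'$ and $x'\nsim y$, never $y\sim y'$.
\item One vertex $x$ of $C$ in $L_M$: the only bad configuration is $M=k+1$ with $\{y,y'\}=\{v_0,v_{qk}\}$. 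This is impossible by your own observation that $v_{qk}$ has complement-neighbours only in $V_{k-1}\cup V_0$, while $x\in V_1$.
\end{itemize}

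With these few lines the proof is complete. Your structural approach also gives slightly more than the paper's induction. For a hole not containing both $v_0$ and $v_{qk}$ the exception never arises, so every hole of length at least $4$ meets every residue class. The paper's argument, by contrast, is anchored at $v_0$.
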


\begin{proof}
    The neighbourhood of a vertex $v_i$ taken modulo $kq+1$ in $\overline{G(n,k)}$ is

    $$\{v_{i+kj+m} : m=0,1\text{ and } j=1,...,q-1\}.$$

    \noindent The remainder of the proof will consider the indices modulo $k$.
    Note that if $v_i \sim v_j$ and $j>i$ then $j \equiv i \text{ or } i+1 \pmod{k}$.
    Moreover, if $v_i \sim v_j$ and $j<i$ then $j \equiv i \text{ or } i-1 \pmod{k}$. Additionally it follows from Lemma \ref{lem:stable} that $v_i \sim v_j$ for all $j \equiv i \pmod{k}$ with the only exception being $v_0 \not\sim v_{kq}$.

    Let $C=\{v_0, v_{i_1}, \ldots, v_{i_r}\}$ induce a cycle with $r \geq 3$.
    Note $v_{i_1} \sim v_{0}$ and $v_{i_r} \sim v_{0}$ with $i_1, i_r > 0$. 
    Thus, $v_{i_1}, v_{i_r} \equiv 0 \text{ or } 1 \pmod{k}$.
    As $r \geq 3$ then $v_{i_1} \not\sim v_{i_r}$ and hence $v_{i_1} \not\equiv  v_{i_r} \pmod{k}$.
    Without loss of generality, let  $i_1 \equiv 1 \pmod{k}$ and $i_r \equiv 0 \pmod{k}$.

    We will now show by induction on $t$ that $i_t \equiv i_{t-1} \text{ or } i_{t-1}+1\pmod{k}$ for each $1 \leq t \leq r-1$.
    Clearly this is true for $t = 1$ so suppose this holds up to some $t \geq 1$.
    Note that if $i_t = kq$ then $i_t>i_{t-1}$ and $v_{i_t} \sim v_{i_{t-1}}$ so $i_t \equiv i_{t-1} \text{ or } i_{t-1}+1\pmod{k}$.
    So we may assume that $i_t < kq$.
    Recall that the neighbours of $v_{i_{t-1}}$ in $\overline{G(n,k)}$ have indices equivalent to $i_{t-1}-1$, $i_{t-1}$, or $i_{t-1}+1$ modulo $k$.
    So, to show a contradiction, suppose that $i_t \equiv i_{t-1}-1 \pmod{k}$.
    As $C$ is an induced cycle, $v_{i_t}$ is not adjacent to $v_{0}, v_{i_1}, \ldots, v_{i_{t-2}}$.
    Let $0 \leq \ell < k$ be such that $v_{i_{t-2}} \equiv \ell \pmod{k}$.
    Then by our inductive hypothesis $i_t \equiv \ell \text{ or } \ell +1 \pmod{k}$ and hence $i_t \equiv \ell-1 \text{ or } \ell \pmod{k}$.
    Additionally by our inductive hypothesis each remainder from $0, 1, \ldots, \ell$ appears in $v_{0}, v_{i_1}, \ldots, v_{i_{t-2}}$ and $\ell \geq 1$.
    Therefore $v_{i_t}$ is adjacent to at least one vertex in $v_{0}, v_{i_1}, \ldots, v_{i_{t-2}}$ which is a contradiction.
    
    Therefore $i_t \equiv i_{t-1} \text{ or } i_{t-1}+1\pmod{k}$ for each $1 \leq t \leq r-1$.
    As $v_{i_{r-1}} \sim v_{i_{r}}$ and $i_r \equiv 0 \pmod{k}$, we have $i_{r-1} \equiv k-1,0, \text{ or }1 \pmod{k}$.
    Note that $i_{r-1} \equiv 0 \text{ nor }1 \pmod{k}$ as otherwise $v_{i_{r-1}} \sim v_0$ and $C$ would not induce a cycle.
    Thus $i_{r-1} \equiv k-1 \pmod{k}$ and each non-zero remainder modulo $k$ must appear in $i_1, \ldots, i_{r-1}$ before $i_r \equiv 0 \pmod{k}$. 
\end{proof}

\begin{theorem}\label{thm:Gqkco-Ck-free}
    For all $k\ge 5$, $\numcrit{k}{\overline{C_4},\overline{C_5},\ldots,\overline{C_{k-1}}}=\infty$.
 \end{theorem}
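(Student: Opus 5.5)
The infinite family will be $\{\infcrit{q}{k-1}: q\ge 1\}$. By Lemma~\ref{lem:crtical} (applied with $k-1\ge 4$ in place of $k$), each $\infcrit{q}{k-1}$ is $k$-vertex-critical. These graphs have $(k-1)q+1$ vertices, so they are pairwise non-isomorphic and the family is infinite. It therefore suffices to show that, for every $q\ge 1$, the graph $\infcrit{q}{k-1}$ contains no induced $\overline{C_m}$ with $4\le m\le k-1$. Equivalently, $\overline{\infcrit{q}{k-1}}$ contains no induced cycle $C_m$ with $4\le m\le k-1$.

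The main tool is Lemma~\ref{lem:cycleincomplement}, read with $k-1$ in place of $k$. Suppose $C$ induces a cycle of length $m\ge 4$ in $\overline{\infcrit{q}{k-1}}$. First reduce to the case $v_0\in C$. Index addition modulo $(k-1)q+1$ gives the map $v_i\mapsto v_{i+1}$, which is an automorphism of $\infcrit{q}{k-1}$: the neighbourhood in Definition~\ref{def:G(q,k)} is defined by differences of indices only, so the graph is circulant. Hence it is also an automorphism of the complement, and we may rotate $C$ so that it contains $v_0$. The lemma then says that every residue modulo $k-1$ occurs among the indices of the vertices of $C$ other than $v_0$.

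This gives $m-1\ge k-1$, that is, $m\ge k$. So no induced cycle of length between $4$ and $k-1$ exists in the complement, which means $\infcrit{q}{k-1}$ is $(\overline{C_4},\dots,\overline{C_{k-1}})$-free for every $q$. This proves the claim.

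The main obstacle is making sure Lemma~\ref{lem:cycleincomplement} applies as stated. Its proof labels the cycle starting from $v_0$ and handles the special non-neighbour pair $v_0,v_{qk}$; after rotation, the wrap-around index needs to be treated consistently. In its proof the conclusion is reached when $i_r\equiv 0$ and all residues $1,\dots,k-2$ appear among $i_1,\dots,i_{r-1}$. The count $r\ge k-1$ (with modulus $k-1$) is exactly what is needed, so I would recheck that step with modulus $k-1$. I would also check circulance directly from the explicit complement neighbourhood given in that proof, $\{v_{i+(k-1)j+m}: m=0,1,\ j=1,\dots,q-1\}$, which is visibly translation invariant. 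If circulance caused trouble, the fallback would be to re-run the residue-propagation argument of Lemma~\ref{lem:cycleincomplement} starting from the vertex of $C$ with the smallest index.
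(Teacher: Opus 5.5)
Your proposal is correct and follows essentially the same route as the paper. Both arguments use the rotational symmetry to place $v_0$ on the hole, then apply Lemma~\ref{lem:cycleincomplement} to force every residue to appear among the remaining vertices, which makes the hole length at least one more than the modulus. Your explicit choice of $\infcrit{q}{k-1}$, which is $k$-vertex-critical, cleanly handles the index shift that the paper's proof glosses over by working with $\infcrit{n}{k}$, which is $(k+1)$-vertex-critical.
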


\begin{proof}
From Lemma \ref{lem:crtical}, it suffices to show that the smallest hole in $\overline{G(n,k)}$ is $C_{k+1}$.
Without loss of generality, let $H=\{v_0, v_{i_1}, \ldots, v_{i_r}\}$ induce the smallest hole in $\overline{G(n,k)}$ with $r \geq 3$.
By Lemma \ref{lem:cycleincomplement} each remainder modulo $k$ must appear in $i_1, \ldots, i_{r}$.
Therefore $r\geq k$ and $|H| \geq k+1$.

\end{proof}

\begin{center}
\begin{figure}[htb]
\def\c{0.7}
\def\r{1.6}
\centering
\qquad
\subfigure[net]{
\scalebox{\c}{
\begin{tikzpicture}
\begin{scope}[every node/.style={circle,fill,draw}]
    \node (u1) at (0*\r,0*\r) {};
    \node (u2) at (1*\r,0*\r) {};
    \node (u3) at (0.5*\r,0.866025*\r) {};
    \node (u11) at (-0.6*\r,-0.866025*\r) {};
    \node (u22) at (1.6*\r,-0.866025*\r) {};
    \node (u33) at (0.5*\r,1.866025*\r) {};
\end{scope}
\begin{scope}
    \path [-] (u1) edge node {} (u2);    
    \path [-] (u2) edge node {} (u3);
    \path [-] (u3) edge node {} (u1);

    \path [-] (u1) edge node {} (u11);
    \path [-] (u2) edge node {} (u22);
    \path [-] (u3) edge node {} (u33);
\end{scope}
\end{tikzpicture}}
}
\qquad
\subfigure[co-net]{
\scalebox{\c}{
\begin{tikzpicture}
\begin{scope}[every node/.style={circle,fill,draw}]
    \node (u1) at (0*\r,0*\r) {};
    \node (u2) at (1*\r,0*\r) {};
    \node (u3) at (0.5*\r,-0.866025*\r) {};
    \node (u12) at (0.5*\r,0.866025*\r) {};
    \node (u23) at (1.5*\r,-0.866025*\r) {};
    \node (u31) at (-0.5*\r,-0.866025*\r) {};
\end{scope}
\begin{scope}
    \path [-] (u1) edge node {} (u2);    
    \path [-] (u2) edge node {} (u3);
    \path [-] (u3) edge node {} (u1);

    \path [-] (u1) edge node {} (u12);
    \path [-] (u2) edge node {} (u12);
    \path [-] (u2) edge node {} (u23);
    \path [-] (u3) edge node {} (u23);
    \path [-] (u3) edge node {} (u31);
    \path [-] (u1) edge node {} (u31);
\end{scope}
\end{tikzpicture}}
}
\caption{Other Graphs forbidden as induced subgraphs in $G(q,k)$}
\end{figure}
\end{center}

\begin{theorem}\label{thm:net-free}
    For all $k \geq 5$, $\numcrit{k}{P_5,\operatorname{net}}=\infty$.
\end{theorem}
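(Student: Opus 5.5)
We will use the infinite family from Definition~\ref{def:G(q,k)}. Fix $k\ge 5$ and set $k'=k-1\ge 4$. By Lemma~\ref{lem:crtical}, every $\infcrit{q}{k'}$ with $q\ge 1$ is $k$-vertex-critical, and these graphs are pairwise distinct since they have distinct orders. From~\cite{CameronHoang2023P5C5} they are $P_5$-free; this is the property that made them an infinite $(P_5,C_5)$-free family, and we will invoke it. It therefore remains to show that $\infcrit{q}{k'}$ is net-free for all $q$. Equivalently, we show that $\overline{\infcrit{q}{k'}}$ contains no induced $\overline{\text{net}}$. Here $\overline{\text{net}}$ is the $3$-sun: a triangle $b_1b_2b_3$ together with an independent set $\{a_1,a_2,a_3\}$ such that $a_i$ is adjacent to exactly the two vertices $b_j$ with $j\neq i$.

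We will work in the complement, as in the proof of Lemma~\ref{lem:cycleincomplement}. There, adjacency $v_i\sim v_j$ with $j>i$ forces $j\equiv i$ or $i+1 \pmod{k'}$, and each class $V_r$ is a clique apart from the single missing edge $v_0v_{qk'}$. Define the residue change along an edge as the difference of residues in the direction of increasing index; it lies in $\{0,1\}$. This gives a rigid order-versus-residue structure. Any triangle $T=\{b_1,b_2,b_3\}$, listed in increasing index order, has residues $r,r,r$; or $r,r,r+1$; or $r,r+1,r+1$. The case $r,r+1,r+2$ is impossible because the outer pair would need a change of $2\le k'-2$. Likewise, every vertex adjacent to two vertices of $T$ has its residue and position pinned to a small window determined by $T$.

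We then run a case analysis over the three triangle types. In each case we determine, from residues and index order, which vertices $a$ can be adjacent to exactly the pair $\{b_j,b_l\}$ and nonadjacent to the third vertex. For example, if $T$ lies entirely in $V_r$, then any such $a$ must itself have residue $r-1$, $r$, or $r+1$, and its nonadjacency to one member of $T$ is forced by position: a vertex of residue $r+1$ sees exactly the vertices of $V_r$ with smaller index, and one of residue $r-1$ sees exactly those with larger index. Hence $a_1,a_2,a_3$ must come in residues that are monotone in position. We then show that two of the $a_i$ would share a residue class, or would lie in adjacent classes in the wrong order, which makes them adjacent and contradicts the independence of $\{a_1,a_2,a_3\}$. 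The mixed-residue triangle types are handled in the same way, using $k'\ge 4$ so that residues $r-1$ and $r+2$ are distinct and non-adjacent classes.

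We expect the main obstacle to be the bookkeeping in this case analysis, in particular the wrap-around effects. These are the missing edge $v_0v_{qk'}$ inside $V_0$ and the indices taken modulo $k'q+1$, which break the clean ``$j>i$'' ordering near the ends. We plan to handle them by first treating configurations avoiding $\{v_0,v_{qk'}\}$ with the monotonicity argument, and then checking separately the few configurations in which $v_0$ or $v_{qk'}$ appears, using the explicit non-neighbourhood description given after Definition~\ref{def:G(q,k)}.
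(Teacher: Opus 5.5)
Your reduction to showing that $\overline{G(q,k')}$ has no induced 3-sun is correct. Your case split is also the paper's. After complementing, the three leaves of the net form your triangle $T$, and your types $r,r,r$; $r,r,r+1$; $r,r+1,r+1$ are exactly the paper's Case~1, Case~2, and the shifted case $i,j\equiv 1\pmod{k}$. The paper fixes a leaf at $v_0$ by vertex-transitivity, whereas you use the linear order. Your choice of $G(q,k-1)$ gets the criticality index right.

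The gap is that the decisive step is only announced, and the mechanism you announce is not the one that works. Independence of $\{a_1,a_2,a_3\}$ never produces the contradiction. In type $r,r,r$, for instance, $a_1$ is forced into $V_{r-1}$ (or $a_1=v_{qk'}$ with $b_1=v_0$), and $a_3$ is forced into $V_{r+1}$ (or $a_3=v_0$ with $b_3=v_{qk'}$). For $k'\ge 4$, $a_1$ and $a_3$ are nonadjacent in every sub-case. So no argument of the form ``two of the $a_i$ share a class or lie in adjacent classes in the wrong order'' closes this case without saying something decisive about $a_2$.

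What actually happens is that in each type one of the three attachments has no possible position, so independence is never needed. Writing $x$ for $v_x$, the complement is described exactly as follows: for $0\le i<j\le qk'$, $i\sim j$ if and only if $j-i\equiv 0$ or $1\pmod{k'}$, $j-i\ge k'$, and $\{i,j\}\ne\{0,qk'\}$. There is no further wrap-around to worry about, and $x\mapsto qk'-x$ is an automorphism.

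\begin{itemize}
\item A vertex $a$ of residue $r+1$ has $N(a)\cap V_r=\{x\in V_r: x\le a-k'-1\}$. This is a down-set, not quite ``all smaller indices''.
\item A vertex $a$ of residue $r-1$ gives the up-set $\{x\in V_r: x\ge a+k'+1\}$.
\item A vertex $a$ of residue $r$ sees all of $V_r\setminus\{a\}$, except that $0$ and $qk'$ miss each other.
\end{itemize}

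Now the three types are handled as follows.

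\begin{itemize}
\item Type $r,r,r$: $a_2$ (seeing $b_1,b_3$ but not $b_2$) cannot have residue $r\pm1$. Residue $r$ would need $\{a_2,b_2\}=\{0,qk'\}$, which is impossible since $b_1<b_2<b_3$.
\item Type $r,r,r+1$: $a_1$ (seeing $b_2,b_3$ but not $b_1$) must have residue $r$ or $r+1$; this is where $k'\ge 4$ is used. Residue $r+1$ gives a down-set of $V_r$ containing $b_2$, hence also $b_1$. Residue $r$ forces $b_1=0$, $a_1=qk'$ and $r=0$; then $qk'-b_3\equiv -1$, so $a_1\nsim b_3$.
\item Type $r,r+1,r+1$: this is the mirror image of the previous type, with $a_3$ impossible.
\end{itemize}

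This is in effect what the paper's proof does: its Case~1 rules out the triangle vertex attached to the middle leaf, and its Case~2 the one attached to $v_0$. With these three non-existence statements in place of the independence argument, your proof is complete.
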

\begin{proof}
    Let $G=\infcrit{q}{k}$ and suppose $G$ has an induced net, $N$. 
    By the symmetry of $G$, we may assume without loss of generality that $v_0$ is one of the leaves of $N$. 
    Let $v_{i}$ and $v_{j}$ be the two other leaves of $N$.
    By Definition~\ref{def:G(q,k)}, $i\equiv 0,1\pmod{k}$ and $j\equiv 0,1\pmod{k}$, but neither $i$ nor $j$ are $1$ or $qk$.
    Further, let $v_{t_0}$, $v_{t_i}$, and $v_{t_j}$ be the vertices of the triangle in $N$ such that $v_{t_h}\sim v_{h}$ for all $h\in\{0,i,j\}$. 
    We will consider two cases, and in both cases $i\equiv 0\pmod{k}$ (a justification that these cases are comprehensive comes at the end of this proof).

    For both cases, let $i=ki'$ for some  $i'\in \{1,\dots q-1\}$. 
    Since $v_{t_0}$ is adjacent to $v_0$ and nonadjacent to $v_i$, we must have that $t_0=kq$ or $kk'-1$ for some $0<k'<i'$ and $t_i=ki''+1$ for some $0<i''\le i'$ from Definition~\ref{def:G(q,k)}.\\

    \noindent\textit{Case 1:} $i\equiv 0\pmod{k}$ and $j\equiv 0 \pmod{k}$.
    
    Let $j=kj'$ for some $j'\in\{1,\dots q-1\}\setminus\{i'\}$. 
    Suppose also without loss of generality that $i'<j'$.
    Now, for any vertex $v_{ki''+1}$ for $0<i''\le i'$, we have $ki''+1 = kj' + k(q-j'+i''') + 2$.
    Therefore, by Definition~\ref{def:G(q,k)}, $v_j\sim v_{t_i}$, a contradiction.
    Thus, we cannot have both $i\equiv 0\pmod{k}$ and $j\equiv 0 \pmod{k}$.\\

    \noindent\textit{Case 2:} $i\equiv 0\pmod{k}$ and $j\equiv 1\pmod{k}$.
      
    Let $j=kj'+1$ for some $j'\in\{1,\cdots, q\}$. 
    Note that if $j'\le i'$, then $v_i\sim v_j$, a contradiction.
    So we must have $i'<j'$ and therefore $i<j$.
    Since $v_0\nsim v_{t_j}$ and $v_{j}\sim v_{t_j}$, we must have $t_j=kj''$ for $j'\le j''\le q-1$.
    Thus, $v_{t_j}\in V_0$, and $v_{t_j}\sim v_{t_0}$, so $t_0\neq qk$ from Lemma~\ref{lem:stable}.
    From above, we must now  have $t_0=kk'-1$ for some $0<k'<i'$.
    Since $v_{t_i}\sim v_{t_j}$, we must have $t_i < t_j$ (since $t_i\equiv 1\pmod{k}$, so is only adjacent to things with labels 0 mod $k$ for larger labels).
    Since $v_{t_j}\sim v_{t_0}$, we must have $t_j < t_0$.
    Further, we also have $t_0 < i$ from above, so we have
    $$t_i < t_j < t_0 < i < j$$
    or
    $$ki''+1 < kj'' < kk'-1 < ki' < kj'+1.$$
    But now, $v_{kj'+1}\sim v_{kk'-1}$, i.e., $v_j\sim v_{t_0}$, a contradiction.\\

    To complete the proof, we note that Cases 1 and 2 are comprehensive because if $i,j\equiv 1\pmod{k}$, then decrementing all vertex labels in $\infcrit{q}{k}$ by $i$ modulo $qk$ will result in $v_i$ being relabeled $v_0$, and $v_j$ being relabeled with something in $V_0$ and $v_0$ with something in $V_1$. 
    Thus, this case is equivalent to Case 2.
    
\end{proof}

\begin{theorem}
\label{thm:conet-free}
    For all $k \geq 5$, $\numcrit{k}{P_5,\operatorname{co-net}}=\infty$.
\end{theorem}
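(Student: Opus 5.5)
Following the approach of Theorem~\ref{thm:net-free}, I would show that for every $q\ge 1$ and $k\ge 4$ the graph $\infcrit{q}{k}$ is co-net-free. By Lemma~\ref{lem:crtical} this gives infinitely many $(k+1)$-vertex-critical graphs for each $k+1\ge 5$. They are $P_5$-free by \cite{CameronHoang2023P5C5}, since that construction was designed for $(P_5,C_5)$-free graphs. Instead of searching for a co-net in $\infcrit{q}{k}$, I would search for a net in the complement $\overline{\infcrit{q}{k}}$, because the complement of a co-net is a net. Complementation is convenient here: in the proof of Lemma~\ref{lem:cycleincomplement} the neighbourhood of $v_i$ in $\overline{\infcrit{q}{k}}$ was computed as $\{v_{i+kj+m}: m\in\{0,1\},\ 1\le j\le q-1\}$ (indices mod $kq+1$). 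Consequently every edge of the complement joins vertices whose residues mod $k$ differ by at most one. Moreover each $V_r$ is a clique in the complement, except for the single missing edge $v_0v_{qk}$ (Lemma~\ref{lem:stable}).

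The key step is to use the triangle of the net. Suppose $T=\{x,y,z\}$ is the triangle and $x',y',z'$ are the pendant leaves in $\overline{\infcrit{q}{k}}$. Since adjacent vertices have residues differing by at most $1$, the residues of $T$ lie in a window $\{r,r+1\}$ of two consecutive residues, wrapping around only through the $v_0/v_{qk}$ boundary. Two of the triangle vertices therefore share a residue class $V_r$. Each leaf has residue within $1$ of its triangle vertex, and it must avoid the other two triangle vertices. I would exploit the order structure: a vertex $v_a$ with residue $r$ is adjacent in the complement to $v_b$ with residue $r+1$ exactly when $b>a$ (up to the wraparound/$j\ge1$ conditions), and to residue $r-1$ exactly when $b<a$. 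Thus the complement locally looks like a ``threshold-type'' ordering between consecutive classes.

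The case split follows the residue pattern of the triangle. In Case A all three vertices lie in one class $V_r$. The leaves must then avoid two same-class vertices, so no leaf lies in $V_r$, and each leaf lies in $V_{r\pm1}$. Two leaves lie on the same side, say $V_{r+1}$. The adjacency rule ($v_b\in V_{r+1}$ is adjacent to $v_a\in V_r$ iff $b>a$) then forces comparabilities: if $x'\sim x$, $x'\nsim y$, and $y'\sim y$, $y'\nsim x$, we need $x<x'<y$ and $y<y'<x$, which is contradictory. In Case B the triangle has pattern $r,r,r+1$ (or $r,r+1,r+1$). The same interval-ordering contradiction should arise between the two same-class vertices, whose private leaves have to lie in neighbouring classes with incompatible inequalities. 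Finally I would handle the boundary effects: the missing edge $v_0v_{qk}$ and the wraparound, where residue $0$ at index $qk$ meets residue $1$ at index $0$. As in the last paragraph of the proof of Theorem~\ref{thm:net-free}, I would use the rotational symmetry $v_i\mapsto v_{i+1}$ to assume a convenient position, for instance that some net vertex is $v_0$.

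The main obstacle is making the ``$b>a$'' adjacency rule precise near the boundary. The condition $j\ge 1$ means a vertex is \emph{not} adjacent in the complement to the vertices within distance less than $k$ of it, and the exceptional edge pattern at $v_0,v_{qk}$ adds further cases. I would first verify the inequality argument cleanly for vertices far from the wraparound, then treat configurations involving $v_0$ or $v_{qk}$ separately. For those I would pick the rotation so the offending vertex becomes a leaf, mirroring Theorem~\ref{thm:net-free}.
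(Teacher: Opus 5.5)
Your plan follows essentially the paper's proof. You pass to $\overline{G(q,k)}$, show it is net-free by splitting on the residue pattern of the triangle, and obtain contradictions from two facts: each $V_r$ is a clique apart from $v_0v_{qk}$, and adjacency between consecutive classes is governed by index order. The paper anchors $v_0$ on the triangle, so its Cases 1 and 2 correspond to your Cases A and B.

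Two refinements are needed.

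\emph{Case B.} The two same-class triangle vertices alone give no contradiction. For $a<b$ in $V_r$, a private leaf of $a$ in $V_{r+1}$ and a private leaf of $b$ in $V_{r-1}$ are consistent with each other. You must first use that the third triangle vertex's class is a clique apart from $v_0v_{qk}$, and check that this exception never produces a leaf. That pushes both private leaves into the far neighbouring class, where your inequalities then clash.

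\emph{The boundary.} It is harmless. For $a<b$, one has $v_a\sim v_b$ in $\overline{G(q,k)}$ if and only if either $b-a\equiv 0\pmod k$ with $\{a,b\}\neq\{0,qk\}$, or $b-a\equiv 1\pmod k$ with $b\neq a+1$. So there are no other wraparound exceptions, and no rotation is needed.
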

\begin{proof}
    We will prove this by showing the equivalent statement that $\overline{\infcrit{q}{k}}$ is net-free.
    As in the proof of Lemma~\ref{lem:cycleincomplement}, we note that he neighbourhood of vertex $v_i$ taken modulo $kq+1$ in $\overline{G(n,k)}$ is
    $$\{v_{i+kj+m} : m=0,1\text{ and } j=1,...,q-1\}.$$ 

    Suppose $\overline{\infcrit{q}{k}}$ has an induced net, $N$. 
    By the symmetry of $\overline{\infcrit{q}{k}}$, we may assume without loss of generality that $v_0$ is one of the vertices on the triangle in $N$. 
    Let $v_{i}$ and $v_{j}$ be the two other vertices on the triangle in $N$.
    Since $v_i\sim v_0$ and $v_j\sim v_0$, we must have $i\equiv 0,1\pmod{k}$ and $j\equiv 0,1\pmod{k}$, but neither $i$ nor $j$ are $1$ or $qk$.
    Further, let $v_{\ell_0}$, $v_{\ell_i}$, and $v_{\ell_j}$ be the leaves of $N$ such that $v_{\ell_h}\sim v_{h}$ for all $h\in\{0,i,j\}$. 
    
    As with the proof of Theorem~\ref{thm:net-free}, we will consider two cases, both of which have $i\equiv 0\pmod{k}$.
    Let $i=ki'$ for some $1\le i'\le q-1$.
    Since $v_{\ell_0}\sim v_0$ and $v_{\ell_0}\nsim v_i$, we must have $\ell_0=kk'+1$ for some $1\le k'\le i'$.
    Similarly, since $v_{\ell_i}\sim v_i$ and $v_{\ell_i}\nsim v_0$, we must have $\ell_i=qk$ or $\ell_i=ki''-1$ for some $1\le i''<i'$.\\

    \noindent\textit{Case 1:} $i\equiv 0\pmod{k}$ and $j\equiv 0\pmod{k}$.

    Let $j=kj'$ for some $1\le j'\le q-1$.
    Suppose also without loss of generality that $i'<j'$.
    But now we cannot have $\ell_i=qk$ or $\ell_i=ki''-1$ for some $1\le i''<i'$ as then $v_j\sim v_{\ell_i}$, a contradiction.\\

    \noindent\textit{Case 2:} $i\equiv 0\pmod{k}$ and $j\equiv 1\pmod{k}$.
    
    Thus, $v_j\in V_1$, and from Lemma~\ref{lem:stable} applied to $\overline{\infcrit{q}{k}}$, we have $v_j\sim v_{\ell_0}$, a contradiction.\\

    As with the proof of Theorem~\ref{thm:net-free}, we note that Cases 1 and 2 are comprehensive, because if $i,j\equiv 1\pmod{k}$, then decrementing all vertex labels in $\infcrit{q}{k}$ by $i$ modulo $qk$ will result in $v_i$ being relabeled $v_0$, and $v_j$ being relabeled with something in $V_0$ and $v_0$ with something in $V_1$. 
    Thus, this case is equivalent to Case 2.
    This completes the proof. 
    
\end{proof}

\section{Conclusion}\label{sec:conclusion}

In this paper we showed there are only finitely many $k$-vertex critical graphs for large sub-families of $P_5$-free and $(P_4+\ell P_1)$-free graphs.
Most notably, Theorem~\ref{thm:P5Bipartite} gives a complete dichotomy on the finiteness of $k$-vertex critical $(P_5, H)$-free graphs when $H$ is bipartite. This resolves two of the remaining cases of Problem~\ref{prob:P5Hord5} and also leads us to Problem~\ref{prob:P5Hlargerchrom} when $\chi(H) \geq 3$.
Our results generalize many previous ones in the literature.
Theorem~\ref{thm:P5Bipartite} generalizes the results that $\numcrit{k}{P_5,H}<\infty$ for all $k$ when $H$ is complete bipartite~\cite{Kaminski2019}, banner~\cite{Brause2022}, $\overline{P_3+P_2}$~\cite{CaiGoedgebeurHuang2023}, $K_{1,3}+P_1$~\cite{xia2024results}, or chair~\cite{Jooken2026}.
Theorem~\ref{thm:P5co-(Kell+P3)-free} generalizes the results that $\numcrit{k}{P_5,H}<\infty$ for all $k$ when $H$ is: $\overline{P_3+P_2}$~\cite{CaiGoedgebeurHuang2023}, $\overline{K_3+2P_1}$~\cite{xia2024results}, or cricket~\cite{Jooken2026}.
Note that Theorems~\ref{thm:P5Bipartite} and~\ref{thm:P5co-(Kell+P3)-free} also generalize the results that $\numcrit{k}{2P_2,(m,\ell)\text{-squid}}<\infty$ for all $k,\ell$~\cite{Adekanye2024IWOCA} for $m=4$ and $m=3$, respectively,  where $(m,\ell)\text{-squid}$ is the graph obtained from $C_m$ by attaching $\ell$ leaves to one of its vertices.

We also gave a deeper structural analysis of the $(2P_2,K_3+P_1)$-free graphs $G(q,k)$ from~\cite{Hoang2015,CameronHoang2023P5C5} which we showed offers an infinite family of $(k+1)$-vertex-critical $P_5$-free graphs which are also: 

\begin{itemize}
 \item $(\overline{C_4},\ldots,\overline{C_{k}})$-free when $k\ge 4$ (by Theorem~\ref{thm:Gqkco-Ck-free})
 \item net-free when $k\geq 1$ (by Theorem~\ref{thm:net-free})
 \item co-net-free when $k\geq 1$ (by Theorem~\ref{thm:conet-free})
\end{itemize}

\noindent It is clear that any new graphs $H$ that satisfy Problem~\ref{prob:P5Hlargerchrom} must contain an induced odd-cycle. Since the graphs in question are $P_5$-free, this only leaves odd cycle lengths of $3$ and $5$. 
If $H$ contains an induced $C_5$, then it follows from~\cite{CameronHoang2023P5C5} that $\numcrit{k}{P_5, H}=\infty$ for all $k\ge 6$. Thus, any graph $H$ satisfying Problem \ref{prob:P5Hlargerchrom} must be $C_5$-free, contain $K_3$, and also be free of all of the graphs listed in the previous paragraph.
From this, and computational evidence, we pose the following conjecture graphs $H$ with $\chi(H) =3$.

\begin{conjecture}
    Let $H$ be a graph with $\chi(H)=3$. 
    Then $\numcrit{k}{P_5,H}<\infty$ for all $k\ge 5$ if and only if $H$ is $(2P_2,K_3+P_1,C_5,\overline{C_6},\operatorname{net},\operatorname{co-net})$-free.
\end{conjecture}

Note that $\numcrit{5}{P_5, C_5}<\infty$~\cite{Hoang2015}, and thus it is open to determine the finiteness of $\numcrit{k}{P_5, H}$ when $H$ contains an induced $C_5$ which we now pose.

\begin{problem}
    \label{prob:C5andkis5} For which graphs $H$ is $\numcrit{5}{P_5, H}<\infty$ when $H$ contains an induced $C_5$?
\end{problem}

As a first step toward resolving Problem~\ref{prob:C5andkis5}, we now consider each graph $H$ that is $(2P_2,K_3+P_1)$-free, contains an induced $C_5$, and has order $6$ (note that this leaves only the four graphs in Figure~\ref{fig:order6C5containinggraphs}).
We begin with an observation that will solve one of these cases and that answers a question that astute readers may have about where the induced half-graphs are in $\infcrit{q}{k}$.

\begin{center}
\begin{figure}[htb]
\def\c{0.3}
\def\r{1.5}
\centering
\qquad
\subfigure[$C_5+P_1$]{
    \scalebox{\c}{
        \begin{tikzpicture}
        \GraphInit[vstyle=Classic]
        \Vertex[L=\hbox{},x=0.0cm,y=3.1235cm]{v0}
        \Vertex[L=\hbox{},x=5.0cm,y=3.0618cm]{v1}
        \Vertex[L=\hbox{},x=0.942cm,y=0.0cm]{v2}
        \Vertex[L=\hbox{},x=2.5253cm,y=5.0cm]{v3}
        \Vertex[L=\hbox{},x=4.0006cm,y=0.0cm]{v4}
        \Vertex[L=\hbox{},x=2.5253cm,y=2.5cm]{v5}
        
        \Edge[](v0)(v2)
        \Edge[](v0)(v3)
        \Edge[](v1)(v3)
        \Edge[](v1)(v4)
        \Edge[](v2)(v4)
        \end{tikzpicture}
    }\label{subfig:C5+P1}
}
\qquad
\subfigure[$W_5$]{
    \scalebox{\c}{
        \begin{tikzpicture}
        \GraphInit[vstyle=Classic]
        \Vertex[L=\hbox{},x=0.0cm,y=3.1235cm]{v0}
        \Vertex[L=\hbox{},x=5.0cm,y=3.0618cm]{v1}
        \Vertex[L=\hbox{},x=0.942cm,y=0.0cm]{v2}
        \Vertex[L=\hbox{},x=2.5253cm,y=5.0cm]{v3}
        \Vertex[L=\hbox{},x=4.0006cm,y=0.0cm]{v4}
        \Vertex[L=\hbox{},x=2.5253cm,y=2.5cm]{v5}
        
        \Edge[](v0)(v2)
        \Edge[](v0)(v3)
        \Edge[](v1)(v3)
        \Edge[](v1)(v4)
        \Edge[](v2)(v4)
        \Edge[](v0)(v5)
        \Edge[](v5)(v3)
        \Edge[](v1)(v5)
        \Edge[](v5)(v2)
        \Edge[](v5)(v4)
        \end{tikzpicture}
    }\label{subfig:W5}
}
\qquad
\subfigure[twin-$C_5$]{
    \scalebox{\c}{
        \begin{tikzpicture}
        \GraphInit[vstyle=Classic]
        \Vertex[L=\hbox{},x=0.0cm,y=3.1235cm]{v0}
        \Vertex[L=\hbox{},x=5.0cm,y=3.0618cm]{v1}
        \Vertex[L=\hbox{},x=0.942cm,y=0.0cm]{v2}
        \Vertex[L=\hbox{},x=2.5253cm,y=5.0cm]{v3}
        \Vertex[L=\hbox{},x=4.0006cm,y=0.0cm]{v4}
        \Vertex[L=\hbox{},x=2.5253cm,y=2.5cm]{v5}
        
        \Edge[](v0)(v2)
        \Edge[](v0)(v3)
        \Edge[](v1)(v3)
        \Edge[](v1)(v4)
        \Edge[](v2)(v4)
        \Edge[](v0)(v5)
        \Edge[](v1)(v5)
        \end{tikzpicture}
    }\label{subfig:twin-C5}
}
\qquad
\subfigure[$\overline{X_{37}}$]{
    \scalebox{\c}{
        \begin{tikzpicture}
        \GraphInit[vstyle=Classic]
        \Vertex[L=\hbox{},x=0.0cm,y=3.1235cm]{v0}
        \Vertex[L=\hbox{},x=5.0cm,y=3.0618cm]{v1}
        \Vertex[L=\hbox{},x=0.942cm,y=0.0cm]{v2}
        \Vertex[L=\hbox{},x=2.5253cm,y=5.0cm]{v3}
        \Vertex[L=\hbox{},x=4.0006cm,y=0.0cm]{v4}
        \Vertex[L=\hbox{},x=2.5253cm,y=2.5cm]{v5}
        
        \Edge[](v0)(v2)
        \Edge[](v0)(v3)
        \Edge[](v1)(v3)
        \Edge[](v1)(v4)
        \Edge[](v2)(v4)
        \Edge[](v2)(v5)
        \Edge[](v3)(v5)
        \Edge[](v4)(v5)
        \end{tikzpicture}
    }\label{subfig:co-X37}
}
\caption{All $(2P_2,K_3+P_1)$-free graphs containing an induced $C_5$ of order $6$.}\label{fig:order6C5containinggraphs}
\end{figure}
\end{center}

\begin{observation} \label{obs:nonneighbourhoodhalfgraph}
    For all $q,k \geq 1$, the graph induced by the non-neighbours of $v_0$ in $G(q,k)$ is the half-graph $H_{q-1}$. 
    

\end{observation}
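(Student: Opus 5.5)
The plan is to read off the non-neighbourhood of $v_0$ directly from Definition~\ref{def:G(q,k)}, match it with the labelled half-graph of Figure~\ref{subfig:halfgraph5labelled}, and then check adjacencies inside it. It is cleanest to work in $\overline{\infcrit{q}{k}}$. As in the proof of Lemma~\ref{lem:cycleincomplement}, there $v_i$ has neighbourhood $\{v_{i+kj+m} : m=0,1,\ j=1,\dots,q-1\}$, with indices modulo $kq+1$.

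\textbf{Step 1: identify the vertex set.} Taking $i=0$, the non-neighbours of $v_0$ in $\infcrit{q}{k}$ are exactly
\[
\{v_{kj} : 1\le j\le q-1\}\cup\{v_{kj+1} : 1\le j\le q-1\}.
\]
This gives $2(q-1)$ vertices. It agrees with the remark after Definition~\ref{def:G(q,k)}: the vertex $v_{qk}$ is excluded because it is adjacent to $v_0$ (Lemma~\ref{lem:stable}). I will set
\[
a_j=v_{kj}, \qquad b_j=v_{kj+1}, \qquad 1\le j\le q-1.
\]

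\textbf{Step 2: each side is a stable set.} The $a_j$ lie in $V_0$ and do not include the pair $v_0,v_{qk}$. The $b_j$ lie in $V_1$. So both $\{a_j\}$ and $\{b_j\}$ are stable by Lemma~\ref{lem:stable}.

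\textbf{Step 3: the cross adjacencies.} This is the main step and the only place needing care. I will show $a_j\sim b_{j'}$ in $\infcrit{q}{k}$ if and only if $j\ge j'$.

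Equivalently, in the complement, $v_{kj}\sim v_{kj'+1}$ exactly when $j<j'$. Suppose $j<j'$. Then $kj'+1=kj+k(j'-j)+1$ with $1\le j'-j\le q-1$ and $m=1$. So the pair is adjacent in the complement, and hence non-adjacent in $\infcrit{q}{k}$.

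Conversely, suppose $j\ge j'$. Starting from $v_{kj}$, the complement neighbours $v_{kj+kj''+m}$ either have index at most $kq$ and are $\equiv 0,1 \pmod k$ with larger index, or they wrap around modulo $kq+1$. In the wrap-around case the index becomes $kj+kj''+m-(kq+1)$, whose residue modulo $k$ is $m-1\in\{k-1,0\}$, never $1$. So $v_{kj'+1}$ is not reached from $v_{kj}$ when $j'\le j$.

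By symmetry of the adjacency relation it suffices to check this from one endpoint. Alternatively, I can verify directly from Definition~\ref{def:G(q,k)}. For $j'=j$ the vertices $v_{kj}$ and $v_{kj+1}$ are consecutive, hence adjacent. For $j'<j$, take $m=2$ and $j''=q-j+j'$; then $kj+kj''+2\equiv kj'+1 \pmod{kq+1}$, which gives the adjacency.

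\textbf{Conclusion.} Under the relabelling of Step~1, the induced subgraph on the non-neighbours of $v_0$ has $a_i\sim b_j$ if and only if $i\ge j$, with no other edges. This is exactly $H_{q-1}$. I expect the only obstacle to be this modular bookkeeping for the wrap-around adjacencies in Step~3; everything else is immediate from Lemma~\ref{lem:stable}.
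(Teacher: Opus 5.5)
Your proof is correct and is essentially the paper's argument: you read off the non-neighbourhood $\{v_{kj},v_{kj+1}:1\le j\le q-1\}$, take stability of each side from Lemma~\ref{lem:stable}, and settle the cross adjacencies by index arithmetic modulo $kq+1$, implicitly using $k\ge 3$ as the definition of $G(q,k)$ does. Your labelling $a_j=v_{kj}$, $b_j=v_{kj+1}$ gives the correct direction, $v_{kj}\sim v_{kj'+1}$ if and only if $j\ge j'$; the paper's proof states the reverse inequality, which is harmless because $H_{q-1}$ is invariant under reversing the indices and swapping the two sides.
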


\begin{proof}

Let $A=V_1 \setminus \{v_1\}$ and $B=V_0 \setminus \{v_0, v_{kq}\}$. 
From Definition \ref{def:G(q,k)}, the vertices in the non-neighbourhood of $v_0$ in $G(q,k)$ are $A \cup B$.
Moreover, from Lemma \ref{lem:stable}, $A$ and $B$ are each stable sets with $|A|=|B|=q-1$. 
Now consider vertices $v_{ik} \in B$ and $v_{jk+1} \in A$.
By Definition \ref{def:G(q,k)}, $v_{ik} \sim v_{jk+1}$ if and only if $j \geq i$.
Thus the non-neighbourhood of $v_0$ in $G(q,k)$ is isomorphic to to the half-graph $H_{q-1}$ where $v_{ik} \in B$ and $v_{jk+1} \in A$ correspond to $b_i$ and $a_i$ in $H_{q-1}$ respectively.
\end{proof}

\begin{theorem}
    For all $k \geq 1$ and $q \geq 1$, $G(q,k)$ is ($C_5+P_1$)-free
\end{theorem}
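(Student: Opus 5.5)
The plan is to use vertex-transitivity together with Observation~\ref{obs:nonneighbourhoodhalfgraph}. Since the neighbourhood rule in Definition~\ref{def:G(q,k)} is invariant under the shift $v_t\mapsto v_{t+1}$ (indices modulo $kq+1$), $G(q,k)$ is vertex-transitive. So if $G(q,k)$ contained an induced $C_5+P_1$, we could assume without loss of generality that the isolated vertex of the copy is $v_0$.

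Then the induced $C_5$ lies entirely in the non-neighbourhood of $v_0$. By Observation~\ref{obs:nonneighbourhoodhalfgraph}, this non-neighbourhood induces the half-graph $H_{q-1}$, which is bipartite: its two parts are the stable sets $A=V_1\setminus\{v_1\}$ and $B=V_0\setminus\{v_0,v_{kq}\}$ from Lemma~\ref{lem:stable}. A bipartite graph has no odd cycle, so it cannot contain an induced $C_5$. This contradiction shows $G(q,k)$ is $(C_5+P_1)$-free.

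The argument needs care in two places.
\begin{itemize}
\item \emph{The transitivity claim.} The paper already invokes "the symmetry of $G$" in the proof of Theorem~\ref{thm:net-free}. For a self-contained justification, note that the adjacency condition depends only on the difference $j-i$ modulo $kq+1$, and this difference set is symmetric under negation. Hence $G(q,k)$ is a circulant graph, and circulant graphs are vertex-transitive.
\item \emph{Degenerate parameters.} The bipartiteness of the non-neighbourhood is exactly what Observation~\ref{obs:nonneighbourhoodhalfgraph} gives, so it needs no new work for $q\ge 2$. For $q=1$ or very small $k$, the non-neighbourhood of $v_0$ is empty or has fewer than five vertices. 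In these cases the conclusion holds trivially, or follows directly from the definition.
\end{itemize}
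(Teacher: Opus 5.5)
Your proof is correct and follows the paper's argument: the paper says only that the result follows immediately from Observation~\ref{obs:nonneighbourhoodhalfgraph}, leaving implicit the vertex-transitivity step that you justify through the circulant structure, and bipartiteness via the stable sets $A$ and $B$ is all you need. One small slip: for $k\le 2$ (outside the range $k\ge 3$ of Definition~\ref{def:G(q,k)}) the formula gives the cycle $C_{kq+1}$, whose non-neighbourhoods are paths that can be arbitrarily long, not sets of fewer than five vertices; paths are still bipartite, so the conclusion is unaffected.
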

\begin{proof}
    The proof follows immediately from Observation~\ref{obs:nonneighbourhoodhalfgraph}.
\end{proof}

\begin{theorem}
    For all $k \geq 1$ and $q \geq 1$, $G(q,k)$ is $W_5$-free.
\end{theorem}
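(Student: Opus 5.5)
Since $\overline{W_5}=C_5+P_1$ and $C_5$ is self-complementary, an induced $W_5$ in $G(q,k)$ is a vertex $h$ (the hub) together with an induced $C_5$ inside $N(h)$. Equivalently, $\overline{G(q,k)}$ would contain an induced $C_5$ all of whose vertices are non-neighbours of $h$ in $\overline{G(q,k)}$. The map $v_t\mapsto v_{t+1}$ (indices modulo $kq+1$) is an automorphism, since the neighbourhood rule in Definition~\ref{def:G(q,k)} depends only on differences. So I will assume the hub is $v_0$ and aim to show that $G(q,k)[N(v_0)]$ is $C_5$-free, i.e.\ that its complement restricted to $N(v_0)$ contains no induced $C_5$.

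\emph{Step 1: locate $N(v_0)$ and isolate $v_1$.} By Definition~\ref{def:G(q,k)}, $N(v_0)=\{v_1,v_{kq}\}\cup\{v_{kj+m}: 2\le m\le k-1,\ 0\le j\le q-1\}$. Hence $v_1$ is the only neighbour of residue $1 \pmod k$, $v_{kq}$ is the only neighbour of residue $0$, and every other neighbour has residue in $\{2,\dots,k-1\}$. I then compute complement degrees inside $N(v_0)$, using the description of the complement neighbourhood from the proof of Lemma~\ref{lem:cycleincomplement}. Inside $N(v_0)$, the complement neighbours of $v_1$ must have residue $0$ or $1$ and a larger index, and the only candidate is $v_{kq}$. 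Thus $v_1$ has complement degree at most $1$ in $N(v_0)$, so it cannot lie on an induced $C_5$ there.

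\emph{Step 2: residue obstruction.} An induced $C_5$ in the complement of $G(q,k)[N(v_0)]$ therefore lies in $N(v_0)\setminus\{v_1\}$, which contains no vertex of residue $1$. I will show this is impossible using the argument of Lemma~\ref{lem:cycleincomplement}. Complement edges only join vertices whose residues differ by at most $1$, and the inductive part of that proof shows that, going around a chordless complement cycle, residues advance by $0$ or $+1$ at each step. Such a cycle must therefore run through every residue class, including $1$, a contradiction. For $k\ge5$ there is also a cruder contradiction: a hole through all $k$ residues has length at least $k+1>5$.

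\emph{Main obstacle.} Lemma~\ref{lem:cycleincomplement} is stated for cycles through $v_0$. To apply it I must rotate a cycle vertex to $v_0$, but rotation by $c$ does not preserve residues modulo $k$ across the wrap-around at $kq+1$. This also affects the exceptional pair $v_0,v_{kq}$: by Lemma~\ref{lem:stable}, $v_0v_{kq}$ is the only edge of $G(q,k)$ inside $V_0$, so it is the one missing complement edge within that residue class. I expect to handle this by redoing the lemma's induction directly for a $5$-cycle inside $N(v_0)\setminus\{v_1\}$, starting at its smallest-index vertex. There I would check that the residue sequence can only increase by $0$ or $1$ and must return to the start, which forces it to wrap through residues $0$ and $1$. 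The only path through residue $0$ in this set is via $v_{kq}$, and $v_{kq}$ has residue-$1$ complement neighbours only of larger index, of which there are none. If this direct induction becomes messy, the fallback is to treat the cases with $v_{kq}$ on the cycle or off it separately, since off it all residues lie in $\{2,\dots,k-1\}$ and a monotone residue walk cannot close up.
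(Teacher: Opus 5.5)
\textbf{Step 1 fails as written.} You have the residue rule reversed. For $a<b$, $v_av_b$ is an edge of $\overline{G(q,k)}$ exactly when $b-a\in\{kj,kj+1: 1\le j\le q-1\}$. So a complement neighbour of \emph{larger} index is $\equiv a$ or $a+1 \pmod k$, not $a-1$ or $a$.

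Consequently the complement neighbours of $v_1$ inside $N(v_0)$ are $v_{k+2},v_{2k+2},\dots,v_{(q-1)k+2}$, and $v_1$ has complement degree $q-1$ there. For example, when $k=q=3$ these are $v_5$ and $v_8$. Meanwhile $v_{kq}$ is not a complement neighbour of $v_1$ at all: $kq-1=k(q-1)+(k-1)$, so $v_1v_{kq}$ is an edge of $G(q,k)$.

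The residue-$1$ contradiction in Step 2 rests entirely on having discarded $v_1$, so your argument leaves $k\in\{3,4\}$ unproved. For $k\ge 5$ you could simply quote Theorem~\ref{thm:Gqkco-Ck-free}, which already makes $G(q,k)$ $C_5$-free. Step 2 also still leans on the monotone-residue claim. Lemma~\ref{lem:cycleincomplement} only provides that for cycles through $v_0$, which is exactly the rotation problem you flagged and did not resolve.

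\textbf{How to repair it.} The conclusion of Step 1 is nevertheless true, for a different reason. The complement neighbours of $v_1$ in $N(v_0)$ all lie in $V_2$, which is a clique of $\overline{G(q,k)}$ by Lemma~\ref{lem:stable}. So on an induced cycle of length at least $4$, the two cycle-neighbours of $v_1$ would be adjacent.

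The same holds for $v_{kq}$: its complement neighbours in $N(v_0)$ are $v_{k-1},v_{2k-1},\dots,v_{(q-1)k-1}\in V_{k-1}$. Indeed $v_t\mapsto v_{-t}$ is an automorphism fixing $v_0$ and swapping $v_1$ and $v_{kq}$, so the two should have been treated alike.

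With both removed you can drop the monotone-walk claim entirely. On the remaining vertices $v_{kj+m}$ ($0\le j\le q-1$, $2\le m\le k-1$), the rule above shows that complement edges join only equal residues or consecutive residues $m,m+1$, and each residue class is a clique. An induced $C_5$ therefore meets each class in at most two vertices, consecutive on the cycle, so it meets at least three classes. Pick a class strictly between the smallest and largest classes it meets, and delete the cycle's (at most two, consecutive) vertices in that class. What remains is connected and has vertices on both sides of that class, but no edge between the sides, a contradiction.

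The paper avoids the obstacle differently: it rotates the $C_5$, not the hub, onto $v_0$, applies Lemma~\ref{lem:cycleincomplement}, and checks that every vertex off the cycle has a complement-neighbour on it. Your hub-centred route, once repaired this way, is self-contained and does not need that lemma.
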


\begin{proof}
    It suffices to show $\overline{G(q,k)}$ is $C_5+P_1$ free. 
    Suppose there is an induced $C_5$ in $\overline{G(q,k)}$. 
    Without loss of generality let $C = \{v_0, v_{i_1}, \ldots , v_{i_4}\}$ induce the $C_5$. 
    From Lemma~\ref{lem:cycleincomplement}, every remainder modulo $k$ must appear in $v_{i_1}, \ldots , v_{i_4}$.
    Since each $V_k$ is a clique, then every vertex has a neighbour on the cycle expect possibly $v_{kq}$.
    However there are two vertices in $C$ with indices congruent to zero modulo $k$ and hence $v_{kq}$ is adjacent to at least one of them.
\end{proof}

Using Jooken's program~\cite{JorikVertexCriticalGenerator} (originally developed for~\cite{Xiaetal2023}) for exhaustively generating all $k$-vertex-critical $(P_{t},H)$-free graphs, we find that 
$$\numcrit{5}{P_5,\text{twin-}C_5}=287,\text{ and}$$ 
$$\numcrit{5}{P_5,\overline{X_{37}}}=188.$$
Both of these sets of $5$-vertex-critical graphs are available at~\cite{CameronP5HfreeGitHub} in graph6 format.
We note Jooken's program is an independent implementation of a similar program developed in~\cite{GoedgebeurSchaudt2018}, itself an optimized version of the original developed in~\cite{Hoang2015}.

We conclude this section by noting the potential for Theorem~\ref{thm:P5Hn} to provide a new proof technique of building around an arbitrarily large induced half-graph for showing $\numcrit{k}{P_5,H}<\infty$ for new graphs $H$. 
This works as we now know that any infinite family of $(P_5,H)$-free graphs necessarily have infinitely many graphs that contain $H_n$ for every fixed natural number $n$.
We also note this potential for a new proof technique of building around an arbitrarily large $\overline{L(K_{2,n})}$  toward a solution to Conjecture~\ref{conj:P4UellP1finite} from Theorem~\ref{thm:P4ellP1Cocktail}.
A similar idea for building around an induced $P_3+\ell P_1$ for arbitrarily large $\ell$ (which we note is contained in large half-graphs) has been successfully used to bound the number of $k$-vertex-critical graphs in various graph families~\cite{Adekanye2024IWOCA,BeatonCameron2025cogemfreeord4finite,BeatonCameron2026IWOCA,Jooken2026}.

\subsection*{Acknowledgments} 
The research of Iain Beaton was supported by the Natural Sciences and Engineering Research Council of Canada (NSERC) grants RGPIN-2025-06012 and DGECR-2025-00001.
The research of Ben Cameron was also supported by NSERC, grants RGPIN-2022-03697 and DGECR-2022-00446. 

\bibliographystyle{abbrv}
\bibliography{refs}
\end{document}